\numberwithin{equation}{section}
\theoremstyle{plain}
\newtheorem{theorem}{Theorem}
\newtheorem{corollary}[theorem]{Corollary}
\newtheorem{proposition}[theorem]{Proposition}
\newtheorem*{theorem*}{Theorem}
\newtheorem*{conjecture*}{Conjecture}
\theoremstyle{definition}
\newtheorem{example}[theorem]{Example}
\newtheorem*{definition}{Definition}
\newcommand{\CC}{{\mathbb{C}}}
\newcommand{\QQ}{{\mathbb{Q}}}
\newcommand{\RR}{{\mathbb{R}}}
\newcommand{\ZZ}{{\mathbb{Z}}}
\newcommand{\calC}{{\mathcal C}}
\newcommand{\ff}{{\bf f}}
\newcommand{\hh}{{\bf h}}
\begin{document}
\title[Complete intersection singularities]{Strange duality between hypersurface and complete intersection singularities}
\author{Wolfgang Ebeling and Atsushi Takahashi}
\thanks{Partially supported 
by the DFG-programme SPP1388 ''Representation Theory'' and 
by JSPS KAKENHI Grant Number 24684005.}
\address{Institut f\"ur Algebraische Geometrie, Leibniz Universit\"at Hannover, Postfach 6009, D-30060 Hannover, Germany}
\email{ebeling@math.uni-hannover.de}
\address{
Department of Mathematics, Graduate School of Science, Osaka University, 
Toyonaka Osaka, 560-0043, Japan}
\email{takahashi@math.sci.osaka-u.ac.jp}
\subjclass[2010]{14J33, 32S20, 32S30, 14L30}
\date{}

\begin{abstract} C.T.C.~Wall and the first author discovered an extension of Arnold's strange duality embracing on one hand series of bimodal hypersurface singularities and on the other, isolated complete intersection singularities. In this paper, we derive this duality from the mirror symmetry and the Berglund-H\"ubsch transposition of invertible polynomials.
\end{abstract}
\maketitle
\section*{Introduction}
During his classification of  hypersurface singularities, Arnold \cite{A} observed a strange duality between the 14 exceptional unimodal singularities. C.T.C.~Wall and the first author \cite{EW} discovered an extension of this duality embracing on one hand series of bimodal singularities and on the other, isolated complete intersection singularities (ICIS) in $\CC^4$. The duals of the complete intersection singularities are not themselves singularities, but are virtual ($k=-1$) cases of series (e.g.\  $W_{1,k} : k \geq 0$) of bimodal singularities. They associated to these well-defined Coxeter-Dynkin diagrams and Milnor lattices and showed that all numerical features of Arnold's strange duality continue to hold.

The objective of this paper is to derive this extended strange duality from the mirror symmetry and the Berglund-H\"ubsch transposition of invertible polynomials. The bimodal series start with singularities with $k=0$ (e.g.\ $W_{1,0}$). They can be given by polynomials with two moduli. Setting one of the moduli equal to zero, one is left with a one-parameter family of weighted homogeneous polynomials. 
It is natural from the mirror symmetry view point to expect that adding one monomial to an invertible polynomial is dual to 
having another $\CC^\ast$-action on the dual polynomial, which leads to our duality between virtual singularities and complete intersection singularities.

We shall proceed as follows. We first classify the non-degenerate invertible polynomials with a $\ZZ/2\ZZ$-action. They are defined by certain $3 \times 3$-matrices. Then we shall classify the possibilities to extend such a $3 \times 3$-matrix to a certain $4 \times 3$-matrix satisfying certain conditions. Such a matrix defines a polynomial $\ff(x,y,z)$ with four monomials with a non-isolated singularity. We shall consider the Berglund-H\"ubsch transpose of this polynomial. The kernel of the transpose $3 \times 4$-matrix defines a $\CC^\ast$-action on the space $\CC^4$ and this matrix and the degree 0 polynomials define a complete intersection singularity in $\CC^4$ as the zero set of two polynomials. 

Following \cite{ET1}, we consider the polynomial $\ff(x,y,z)-xyz$. Under certain conditions, there is a coordinate transformation which transforms this polynomial to a polynomial $\hh(x,y,z)-xyz$, where $\hh$ is again a polynomial with four monomials, but now has an isolated singularity at the origin. We call this a {\em virtual singularity}. The polynomial $\hh$ is no longer weighted homogenous but its Newton polygon at infinity has two two-dimensional faces. We thus obtain a duality between the virtual hypersurface singularities and complete intersection singularities.

We show that this duality has the features of Arnold's strange duality. Namely, we associate Dolgachev and Gabrielov numbers to the polynomials $\hh$ and the equations defining the complete intersection singularities generalizing the approach of \cite{ET1}. It turns out that the Dolgachev numbers of the polynomial $\hh$ are the Gabrielov numbers of the pair of polynomials defining the complete intersection singularity and vice versa, the Gabrielov numbers of the polynomial $\hh$ are the Dolgachev numbers of the pair of polynomials defining the complete intersection singularity. Moreover, we show that the reduced zeta function of the monodromy at infinity of a virtual singularity coincides with the product of the Poincar\'e series of the coordinate ring of the dual complete intersection singularity and a polynomial encoding its Dolgachev numbers.

As an example we consider those singularities with Gorenstein parameter being equal to 1. In this way, we recover precisely the virtual singularities of the bimodal series and the extension of Arnold' strange duality found in \cite{EW}. Finally, we construct Coxeter-Dynkin diagrams for the virtual bimodal singularities and show that they can be transformed to graphs which have the same shape as in the exceptional unimodal case used in Gabrielov's original definition of the numbers now named after him.

\section{Invertible polynomials}
We recall some general definitions about invertible polynomials.

Let $f(x_1,\dots, x_n)$ be a  weighted homogeneous polynomial, namely, a polynomial with the property that there are positive integers $w_1,\dots ,w_n$ and $d$ such that 
$f(\lambda^{w_1} x_1, \dots, \lambda^{w_n} x_n) = \lambda^d f(x_1,\dots ,x_n)$ 
for $\lambda \in \CC^\ast$. We call $(w_1,\dots ,w_n;d)$ a system of {\em weights}. 
\begin{definition}
A  weighted homogeneous polynomial $f(x_1,\dots ,x_n)$ is called {\em invertible} if 
the following conditions are satisfied:
\begin{enumerate}
\item the number of variables ($=n$) coincides with the number of monomials 
in the polynomial $f(x_1,\dots x_n)$, 
namely, 
\[
f(x_1,\dots ,x_n)=\sum_{i=1}^na_i\prod_{j=1}^nx_j^{E_{ij}}
\]
for some coefficients $a_i\in\CC^\ast$ and non-negative integers 
$E_{ij}$ for $i,j=1,\dots, n$,
\item a system of weights $(w_1,\dots ,w_n;d)$ can be uniquely determined by 
the polynomial $f(x_1,\dots ,x_n)$ up to a constant factor ${\rm gcd}(w_1,\dots ,w_n;d)$, 
namely, the matrix $E:=(E_{ij})$ is invertible over $\QQ$.
\end{enumerate}
An invertible polynomial is called {\em non-degenerate}, if it has an isolated singularity at the origin. 
\end{definition}

Without loss of generality we shall assume that $\det E > 0$.

An invertible polynomial has a {\em canonical system of weights} $W_f=(w_1, \ldots , w_n;d)$ given by the unique solution of the equation
\begin{equation*}
E
\begin{pmatrix}
w_1\\
\vdots\\
w_n
\end{pmatrix}
={\rm det}(E)
\begin{pmatrix}
1\\
\vdots\\
1
\end{pmatrix}
,\quad 
d:={\rm det}(E).
\end{equation*}
This system of weights is in general non-reduced, i.e.\  in general $c_f:= {\rm gcd}(w_1, \ldots , w_n,d)>1$.

\begin{definition}
Let $f(x_1,\dots ,x_n)=\sum_{i=1}^na_i\prod_{j=1}^nx_j^{E_{ij}}$ be an invertible polynomial. Consider the free abelian group $\oplus_{i=1}^n\ZZ\vec{x_i}\oplus \ZZ\vec{f}$ 
generated by the symbols $\vec{x_i}$ for the variables $x_i$ for $i=1,\dots, n$
and the symbol $\vec{f}$ for the polynomial $f$.
The {\em maximal grading} $L_f$ of the invertible polynomial $f$ 
is the abelian group defined by the quotient 
\[
L_f:=\bigoplus_{i=1}^n\ZZ\vec{x_i}\oplus \ZZ\vec{f}\left/I_f\right.,
\]
where $I_f$ is the subgroup generated by the elements 
\[
\vec{f}-\sum_{j=1}^nE_{ij}\vec{x_j},\quad i=1,\dots ,n.
\]
\end{definition}

\begin{definition}
Let $f(x_1,\dots ,x_n)$ be an invertible polynomial and $L_f$ be the maximal grading of $f$.
The {\em maximal abelian symmetry group} $\widehat{G}_f$ of $f$ is the abelian group defined by 
\[
\widehat{G}_f:={\rm Spec}(\CC L_f),
\]
where $\CC L_f$ denotes the group ring of $L_f$. Equivalently, 
\[
\widehat{G}_f=\left\{(\lambda_1,\dots ,\lambda_n)\in(\CC^\ast)^n \, \left| \,
\prod_{j=1}^n \lambda_j ^{E_{1j}}=\dots =\prod_{j=1}^n\lambda_j^{E_{nj}}\right\} \right..
\]
Moreover, we define
\[
G_f=\left\{(\lambda_1,\dots ,\lambda_n)\in \widehat{G}_f \, \left| \, \prod_{j=1}^n \lambda_j ^{E_{1j}}=\dots =\prod_{j=1}^n\lambda_j^{E_{nj}}=1 \right\} \right..
\]
\end{definition}

Let $f(x_1,\dots ,x_n)$ be an invertible polynomial and $W_f=(w_1, \ldots , w_n;d)$ be the canonical system of weights associated to $f$. Set 
\[ q_i := \frac{w_i}{d}, \quad i=1, \ldots , n.
\]
Note that $G_f$ always contains the {\em exponential grading operator}
\[
g_0:=(\exp(2\pi \sqrt{-1}q_1), \ldots , \exp(2\pi \sqrt{-1}q_n)).
\]
Let $G_0$ be the subgroup of $G_f$ generated by $g_0$. One has (cf.\ \cite{ET})
\[
[G_f : G_0]=c_f.
\]
Let $f(x_1,\dots ,x_n)=\sum_{i=1}^na_i\prod_{j=1}^nx_j^{E_{ij}}$ be an invertible polynomial. Following \cite{BH}, the {\em Berglund-H\"ubsch transpose} of $\widetilde{f}(x_1, \ldots , x_n)$ of $f$ is defined by 
\[ 
\widetilde{f}(x_1,\dots ,x_n)=\sum_{i=1}^na_i\prod_{j=1}^nx_j^{E_{ji}}.
\]
By \cite{BHe}, for a subgroup $G \subset G_f$ its {\em dual group}  $\widetilde{G}$ is defined by 
\[
\widetilde{G}:= {\rm Hom}(G_f/G, \CC^\ast).
\]
Note that ${\rm Hom}(G_f/G, \CC^\ast)$ is isomorphic to $G_{\widetilde{f}}$, see \cite{BHe}.
By \cite{Kr}, we have
\[
\widetilde{G}_0 = {\rm SL}_n(\ZZ) \cap G_{\widetilde{f}}. 
\]
Moreover, by \cite[Proposition 3.1]{ET}, we have $|\widetilde{G}_0|=c_f$.

\section{Invertible polynomials with a $\ZZ/2\ZZ$-action}
Let $f(x,y,z)$ be a non-degenerate invertible polynomial with $[G_f : G_0]=2$. We shall now classify the non-degenerate invertible polynomials with such a group action.

\begin{proposition}
There are the following non-degenerate invertible polynomials $f(x,y,z)$ with $[G_f : G_0]=2$.  We list the possible types and the conditions. The coordinates are chosen so that the action of $\widetilde{G}_0=\ZZ/2\ZZ$ on $\widetilde{f}$ is given by $(x,y,z) \mapsto (-x,-y,z)$.
\begin{itemize}
\item[{\rm I:}] $f(x,y,z)=x^{p_1}+y^{p_2}+z^{p_3}$; $p_1, p_2$ even, 
\item[{\rm IIA:}] $f(x,y,z)=x^{p_2}+xy^{p_3/p_2}+z^{p_1}$; $p_2$ odd, $p_3/p_2$ even, 
\item[{\rm IIB:}] $f(x,y,z)=x^{p_1}+y^{p_2}+yz^{p_3/p_2}$; $p_1, p_2$ even, 
\item[{\rm III:}] $f(x,y,z)=x^{q_2+1}y+xy^{q_3+1}+z^{p_1}$; $q_2, q_3$ even, 
\item[{\rm IV:}] $f(x,y,z)=x^{p_1}+xy^{\frac{p_2}{p_1}}+yz^{\frac{p_3}{p_2}}$; $p_2/p_1$ even,  $p_1$ odd.
\end{itemize}
\end{proposition}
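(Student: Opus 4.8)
The plan is to combine the structure theorem of Kreuzer and Skarke for non-degenerate invertible polynomials with a reduction of the hypothesis $[G_f:G_0]=2$ to linear algebra over $\FF_2$. By that structure theorem, after permuting the variables the exponent matrix $E$ of a non-degenerate invertible polynomial in three variables is a disjoint-variable sum of Fermat, chain and loop atoms, which leaves only finitely many shapes: three Fermat monomials; a Fermat monomial together with a two-variable chain or a two-variable loop; a single three-variable chain; and a single three-variable loop. I would write $E$ with indeterminate exponents for each of these shapes and treat the shapes one at a time.

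The key reduction is the following. Since $G_{\widetilde f}$ consists of diagonal scalings, an element of $\widetilde{G}_0=\mathrm{SL}_3(\ZZ)\cap G_{\widetilde f}$ is an integral diagonal matrix of finite order and determinant one, hence a diagonal $\pm1$ matrix with an even number of entries equal to $-1$. Encoding such a sign pattern by a vector $s\in\FF_2^3$ (with $s_j=1$ when $x_j\mapsto -x_j$), invariance of $\widetilde f$ becomes $E^{\mathsf T}s\equiv 0\pmod 2$ and the determinant-one condition becomes $s_1+s_2+s_3\equiv 0\pmod 2$. Using $[G_f:G_0]=c_f=|\widetilde{G}_0|$ from the excerpt, I therefore get
\[
[G_f:G_0]=2 \iff \dim_{\FF_2}\bigl\{\,s\in\FF_2^3 \ :\ E^{\mathsf T}s\equiv 0,\ s_1+s_2+s_3\equiv 0 \pmod 2\,\bigr\}=1 .
\]

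With this criterion the proof becomes a finite computation: for each shape I reduce $E$ modulo $2$ and determine, in terms of the parities of the exponents, when the solution space above is exactly one-dimensional. For three Fermat monomials this happens precisely when two of the three exponents are even (whence ``$p_1,p_2$ even''), and the chain and loop shapes yield in the same way the conditions of IIA, IIB, III and IV. In each admissible case the unique nonzero solution $s$ has weight two, so after a permutation of coordinates it becomes $(1,1,0)$, i.e.\ the involution $(x,y,z)\mapsto(-x,-y,z)$ of the statement; the position of the fixed variable relative to the atoms is what distinguishes IIA from IIB. The three-variable loop is eliminated cleanly: its only nonzero diagonal sign-symmetry is the total inversion $s=(1,1,1)$, of odd weight and hence determinant $-1$, so the solution space is trivial and $[G_f:G_0]=1$ there.

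The main obstacle is the bookkeeping of completeness and of collapsing the many orderings. One must check that every shape together with every placement of its atoms has been considered, that the cases with one-dimensional solution space organize without overlap or omission into exactly the five families I--IV once the involution is fixed to $(-x,-y,z)$, and --- the most delicate point --- that the mod-$2$ conditions are translated back faithfully into the integer conditions on the $p_i$ and $q_i$, since the passage between the entries of $E$ and the parameters $p_1,p_2,p_3,q_2,q_3$ (together with the divisibilities $p_3/p_2$ and $p_2/p_1$) must be matched exactly to the chosen normalization.
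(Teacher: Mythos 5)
Your outline (Kreuzer--Skarke atoms plus a parity analysis of diagonal sign symmetries) is close in spirit to the paper, whose entire proof is a one-line inspection of the classification table in [ET1]; but your advertised ``key reduction'' is false, and that is a genuine gap. The set $\bigl\{s\in\FF_2^3 : E^{T}s\equiv 0,\ s_1+s_2+s_3\equiv 0 \pmod 2\bigr\}$ is in bijection with the \emph{$2$-torsion subgroup} of $\widetilde{G}_0$, not with $\widetilde{G}_0$ itself. The paper's formula $\widetilde{G}_0={\rm SL}_n(\ZZ)\cap G_{\widetilde{f}}$, which you took literally to force entries $\pm1$, is a typo for Krawitz's theorem $\widetilde{G}_0={\rm SL}_n(\CC)\cap G_{\widetilde{f}}$; elements of $G_{\widetilde{f}}$ have root-of-unity, not integer, entries, and taken literally the $\ZZ$-version is inconsistent with the other fact you invoke, $|\widetilde{G}_0|=c_f$. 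Consequently your criterion only says that $\widetilde{G}_0$ contains exactly one involution, which does not pin down its order. Concretely, take $f=x^4+y^8+z^3$, of Fermat shape with two even exponents: your solution space is $\{0,(1,1,0)\}$, of dimension one, yet the canonical weight system is $(24,12,32;96)$, so $[G_f:G_0]=c_f=\gcd(24,12,32,96)=4$; indeed $\widetilde{G}_0$ is cyclic of order four, generated by $(\zeta_4,\zeta_8^6,1)$, with $(-1,-1,1)$ as its unique involution. So the displayed ``iff'' fails from right to left, and with it the sufficiency half of your classification, including statements such as ``the solution space is trivial, hence $[G_f:G_0]=1$'' for the loop.

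What survives is the left-to-right implication: if $[G_f:G_0]=2$ then $\widetilde{G}_0\cong\ZZ/2\ZZ$, its generator has order two and therefore genuinely is a $\pm1$ diagonal matrix of determinant one fixing $\widetilde{f}$, and your shape-by-shape $\FF_2$ computation then correctly produces the five types and parity conditions (and correctly eliminates the three-variable loop, whose only sign symmetry $(1,1,1)$ has determinant $-1$). That forward implication is exactly what the Proposition amounts to as the paper uses it --- note that the paper's stated conditions are likewise only necessary, since $x^4+y^8+z^3$ also satisfies ``type I, $p_1,p_2$ even'' --- so your argument can be repaired simply by dropping the converse claim. If you insist on a true equivalence, the mod $2$ count must be replaced by a computation of $c_f=\gcd(w_1,w_2,w_3,d)$ for the canonical weights of each shape, an arithmetic condition strictly stronger than the existence of a unique sign involution.
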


\begin{proof} This follows by inspection of \cite[Table~1]{ET1}.
\end{proof}

Let $\widehat{G}_0$ be the subgroup of $\widehat{G}_f$ 
defined by the commutative diagram of short exact sequences
\[ 
\xymatrix{
\{ 1 \}\ar[r] & G_0 \ar[r]\ar@{^{(}->}[d]  & \widehat{G}_0 \ar[r]\ar@{^{(}->}[d] &  \CC^\ast \ar[r]\ar@{=}[d]& \{ 1 \}\\
\{ 1 \}\ar[r] & G_{f} \ar[r] & \widehat{G}_{f} \ar[r] & \CC^\ast \ar[r]& \{ 1 \}
}
\]
Let $L_0$ be the quotient of $L_{f}$ corresponding to the subgroup $\widehat{G}_0$ of $\widehat{G}_{f}$. 

We shall now classify $4 \times 3$-matrices $E = (E_{ij})^{i=1,2,3,4}_{j=1,2,3}$ such that 
\[ \ZZ \vec{x} \oplus \ZZ \vec{y} \oplus \ZZ \vec{z} \oplus \ZZ \vec{f} / \langle E_{i1} \vec{x} +E_{i2} \vec{y}+E_{i3} \vec{z}= \vec{f}, i=1, \ldots ,4 \rangle \cong L_0
\]
and $\calC_{(F,G_0)} := [ (F^{-1}(0) \setminus \{ 0 \})/ \widehat{G}_0]$, where $F:= \sum_{i=1}^4 a_i x^{E_{i1}}y^{E_{i2}}z^{E_{i3}}$, is a smooth projective line with 4 isotropic points whose orders are $\alpha_1, \alpha_2, \alpha_3, \alpha_4$, where $A_{(f,G_0)}=(\alpha_1, \alpha_2, \alpha_3, \alpha_4)$ are the Dolgachev numbers of the pair $(f,G_0)$ defined in \cite{ET}, for general $a_1, a_2, a_3,  a_4$.

\begin{proposition} The possible matrices $E$ are classified into the following types up to a permutation of the rows.  The matrices are described by the corresponding polynomials $F$.
\begin{itemize}
\item[{\rm I:}]  $a_1x^{p_1}+a_2y^{p_2}+a_3z^{p_3}+a_4x^{\frac{p_1}{2}}y^{\frac{p_2}{2}}$
\item[{\rm IIA:}] $a_1x^{p_2}+a_2xy^{\frac{p_3}{p_2}}+a_3z^{p_1}  +a_4x^{\frac{p_2+1}{2}}y^{\frac{p_3}{2p_2}}$
\item[{\rm IIB:}] $a_1x^{p_1} + a_2y^{p_2} + a_3yz^{\frac{p_3}{p_2}} +a_4x^{\frac{p_1}{2}}y^{\frac{p_2}{2}}$
\item[${\rm IIB}^\sharp:$] $(p_2=2)$ $a_1x^{\frac{p_1}{2}}z^{\frac{p_3}{2}}+a_2y^2 + a_3yz^{\frac{p_3}{2}} +a_4x^{\frac{p_1}{2}}y$
\item[{\rm III:}] $a_1x^{q_2+1}y + a_2xy^{q_3+1}+a_3z^{p_1} +a_4x^{\frac{q_2}{2}+1}y^{\frac{q_3}{2}+1}$
\item[{\rm IV:}] $a_1x^{p_1} + a_2xy^{\frac{p_2}{p_1}} +a_3yz^{\frac{p_3}{p_2}} +a_4x^{\frac{p_1+1}{2}}y^{\frac{p_2}{2p_1}}$
\item[${\rm IV}^\sharp:$] $(\frac{p_2}{p_1}=2)$ $a_1x^{\frac{p_1-1}{2}}z^{\frac{p_3}{p_2}}+a_2xy^2+a_3yz^{\frac{p_3}{p_2}}+a_4x^{\frac{p_1+1}{2}}y$
\end{itemize}
\end{proposition}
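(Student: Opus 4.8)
The plan is to argue type by type, running through the five families of $f$ listed in the preceding proposition and, for each, determining which fourth monomial can be adjoined (and, at two boundary parameter values, which extra configuration appears). First I would make the grading data explicit: for each $f$ I record the maximal grading $L_f$, identify the order-two element whose quotient realizes $\widehat{G}_0\hookrightarrow\widehat{G}_f$ — dually $L_0=L_f/(\ZZ/2\ZZ)$, since $\widehat{G}_f/\widehat{G}_0\cong G_f/G_0\cong\ZZ/2\ZZ$ forces the kernel of $L_f\to L_0$ to be cyclic of order two — and read off the Dolgachev numbers $A_{(f,G_0)}=(\alpha_1,\alpha_2,\alpha_3,\alpha_4)$ from \cite{ET}.

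Next I would translate the two requirements on $E$ into combinatorial conditions. Writing the four relations as the rows $(E_{i1},E_{i2},E_{i3},-1)$ of a $4\times4$ integer matrix, the isomorphism with $L_0$ says that this matrix has rank three and that its cokernel carries the torsion of $L_0$; equivalently, every monomial $x^{E_{i1}}y^{E_{i2}}z^{E_{i3}}$ has degree $\vec{f}$ in $L_0$, so each exponent vector lies in a fixed coset of the exponent lattice. I would then list the finitely many nonnegative solutions of this degree equation. The three monomials of $f$ are always among them, and a direct computation shows that the only further solution is the stated fourth monomial — except at the special parameter values where one more solution, namely $x^{p_1/2}z^{p_3/2}$ for type IIB or $x^{(p_1-1)/2}z^{p_3/p_2}$ for type IV, becomes available.

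The decisive constraint is the geometric one: for general $a_i$ the orbifold quotient $\calC_{(F,G_0)}=[(F^{-1}(0)\setminus\{0\})/\widehat{G}_0]$ must be a smooth projective line with exactly four isotropic points, of orders $\alpha_1,\alpha_2,\alpha_3,\alpha_4$. To check this I would realize $F^{-1}(0)$ inside the quotient of weighted projective space by $\widehat{G}_0$, confirm that the generic member of the resulting linear system is smooth and rational, and compute the stabilizer orders at the points lying over the coordinate vertices and over the divisor cut out by the added monomial. Matching these orders against $A_{(f,G_0)}$ both selects the correct fourth monomial and rejects candidates that would produce the wrong genus or a fifth orbifold point. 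The sharp subcases $\mathrm{IIB}^\sharp$ ($p_2=2$) and $\mathrm{IV}^\sharp$ ($p_2/p_1=2$) surface precisely here: at those values the extra degree-$\vec{f}$ monomial found above can replace $x^{p_1}$, and substituting it again passes the orbifold test, yielding the additional configurations listed.

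The main obstacle I anticipate is this last step: extracting the coarse space and the stabilizer orders of $\calC_{(F,G_0)}$ directly from the matrix $E$. Proving smoothness and rationality of the generic fiber and counting the four isotropic points with their precise orders demands a careful analysis of the $\widehat{G}_0$-action on $\{F=0\}$ in weighted projective space, and it is the interaction between this orbifold count and the torsion bookkeeping (Smith normal form matching $L_0$) that makes the boundary cases delicate; everything else reduces to the routine finite enumeration of monomials of the prescribed degree.
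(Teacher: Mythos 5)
Your proposal follows essentially the same route as the paper's proof: enumerate the finitely many monomials of degree $\vec{f}$ in $L_0$ from the explicit relations, then use the Dolgachev-number condition on $\calC_{(F,G_0)}$ to select the admissible fourth monomial, with the sharp configurations ${\rm IIB}^\sharp$ and ${\rm IV}^\sharp$ arising exactly at the boundary values $p_2=2$ and $p_2/p_1=2$ by replacing $x^{p_1}$. The only real difference is economy: where you plan a full orbifold stabilizer computation, the paper settles the boundary case with a one-line criterion (for IIB with $p_2=2$, matching $A_{(f,G_0)}$ forces $F(1,0,z)\neq 0$ for $z\neq 0$, i.e.\ three monomials involving $y$), and it writes out only the hardest case IIB, declaring the remaining types analogous.
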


\begin{proof} We only give the proof for the case IIB which is the most difficult one. The other cases are easier and are treated analogously. 

In the case IIB, the group $L_f$ is the quotient of the abelian group $\ZZ \vec{x} \oplus \ZZ \vec{y} \oplus \ZZ \vec{z} \oplus \ZZ \vec{f} $ given by the relations $p_1 \vec{x}= p_2 \vec{y}= \vec{y} + \frac{p_3}{p_2} \vec{z} = \vec{f}$ and $L_0$ is given by the additional relation 
\begin{equation}
\frac{p_1}{2}\vec{x} = \frac{p_2}{2} \vec {y} \label{eq:xy}
\end{equation}
We derive from these relations the relation 
\begin{equation}
(p_2-1) \vec{y} = \frac{p_3}{p_2} \vec{z}. \label{eq:yz}
\end{equation}

(1) We first classify all monomials which can appear in $F$. Suppose a monomial $x^ay^b$ appears. 
Then we must have
\[ a\vec{x} + b \vec{y} = \vec{f} = \frac{p_1}{2} \vec{x} + \frac{p_2}{2} \vec{y}. 
\]
From this we get
\[ \left( \frac{p_1}{2} -a \right) \vec{x} = \left( b - \frac{p_2}{2} \right) \vec{y}. \]
By Relation (\ref{eq:xy}) there must exist an integer $c$ such that
\[ \frac{p_1}{2} -a = \frac{p_1}{2} c \quad \mbox{and} \quad b -  \frac{p_2}{2} =  \frac{p_2}{2} c. \]
But
\[  \frac{p_1}{2}(1-c)=a \geq 0  \quad \mbox{and} \quad \frac{p_2}{2} (c+1) = b \geq 0. \]
This implies $c=-1,0,1$. Therefore we obtain the possibilities
\[ x^{\frac{p_1}{2}}y^{\frac{p_2}{2}}, y^{p_2} \mbox{ or } x^{p_1}. \]
In a similar way, using the relation (\ref{eq:yz}),  we can derive the possibilities
\[ y^{p_2}, yz^{\frac{p_3}{p_2}} \mbox{ or } z^{p_3}\ ( \mbox{if } p_2=2) \]
Now suppose that a monomial $x^az^b$ appears. Then 
\[ a \vec{x} + b \vec{z} = \vec{f} = \vec{y} + \frac{p_3}{p_2} \vec{z}. \]
From this it follows that $\vec{y} = c \vec{x}$ for some positive integer $c$ since $\vec{y} + \frac{p_3}{p_2} \vec{z}= \vec{f}$ is the only relation involving $\vec{z}$. Relation (\ref{eq:xy}) implies $p_2=2$, $\vec{y} = \frac{p_1}{2} \vec{x}$ and
\[ a\vec{x} + b \vec{z} = \frac{p_1}{2} \vec{x} + \frac{p_3}{2} \vec{z}. \]
This yields the possibilities
\[ x^{p_1}, x^{\frac{p_1}{2}}z^{\frac{p_3}{2}}\ ( \mbox{if } p_2=2) \mbox{ or } z^{p_3}\ ( \mbox{if } p_2=2) . \]
Finally one can derive that there are no monomials of the form $x^ay^bz^c$ with $a,b,c >0$. 

(2) Therefore, if $p_2 \neq 2$, we only obtain the possibility
\[ F(x,y,z)=a_1x^{p_1} + a_2y^{p_2} + a_3yz^{\frac{p_3}{p_2}} +a_4x^{\frac{p_1}{2}}y^{\frac{p_2}{2}}. \]
If $p_2=2$ we obtain several possibilities. In this case we have to consider the system of weights for $G_0$
\[ \left( \frac{p_3}{2}, \frac{p_1p_3}{4}, \frac{p_1}{2}; \frac{p_1p_3}{2} \right) \]
and the Dolgachev numbers of the pair $(f,G_0)$ given by \cite{ET}
\[ A_{(f,G_0)} = \left( \frac{p_1}{2}, \frac{p_3}{2}, \frac{p_3}{2}, \frac{p_1}{2} \right). \]
In order to obtain the same Dolgachev numbers for $F$, $F(1,0,z)$ must be non-zero if $z \neq 0$. Therefore the polynomial $F$ must contain 3 monomials involving $y$. This leaves us with the only additional possibility
\[ F(x,y,z)= a_1x^{\frac{p_1}{2}}z^{\frac{p_3}{2}}+a_2y^2 + a_3yz^{\frac{p_3}{2}} +a_4x^{\frac{p_1}{2}}y. \]
\end{proof}

We associate to these matrices a pair of polynomials as follows. We observe that the kernel of the matrix $E^T$ is either generated by the vector $(1,1,0,-2)^T$ or by the vector $(1,1,-1,-1)^T$. The second case occurs precisely for the matrices of type ${\rm IIB}^\sharp$ and ${\rm IV}^\sharp$.
Let $R:=\CC[x,y,z,w]$. In the first case, there exists a $\ZZ$-graded structure on $R$ given by the $\CC^\ast$-action
\[ \lambda \ast (x,y,z,w) = (\lambda x, \lambda y, z , \lambda^{-2} w) \quad \mbox{for } \lambda \in \CC^\ast.
\]
In the second case, there exists a $\ZZ$-graded structure on $R$ given by the $\CC^\ast$-action
\[ \lambda \ast (x,y,z,w) = (\lambda x, \lambda y, \lambda^{-1}z , \lambda^{-1} w) \quad \mbox{for } \lambda \in \CC^\ast.
\]
Let $R= \bigoplus_{i \in \ZZ} R_i$ be the decomposition of $R$ according to one of these $\ZZ$-gradings. Let $E^T$ be the transposed matrix.  We associate to this the polynomial 
\[\widetilde{f}(x,y,z,w) := x^{E_{11}}y^{E_{21}}z^{E_{31}}w^{E_{41}}+ x^{E_{12}}y^{E_{22}}z^{E_{32}}w^{E_{42}} +  x^{E_{13}}y^{E_{23}}z^{E_{33}}w^{E_{43}}.
\]
In the first case, we have $\widetilde{f} \in R_0=\CC[x^2w,y^2w,z,xyw]$.
Let
\[ X:=x^2w, \quad Y:=y^2w, \quad Z:=z, \quad W:=xyw. 
\]
In these new coordinates, we obtain a pair of polynomials
\[
\widetilde{\ff}_1(X,Y,Z,W) = XY-W^2, \quad \widetilde{\ff}_2(X,Y,Z,W) = \widetilde{f}(X,Y,Z,W).
\]

In the second case, we have  $\widetilde{f} \in R_0=\CC[xw,yz,xz,yw]$. Let
\[ X:=xw, \quad Y:=yz, \quad Z:=xz \quad W:=yw. 
\]
In these new coordinates, we obtain a pair of polynomials
\[
\widetilde{\ff}_1(X,Y,Z,W) = XY-ZW, \quad \widetilde{\ff}_2(X,Y,Z,W) = \widetilde{f}(X,Y,Z,W).
\]

Now we choose for each of the matrices $E$ special values $a_1, a_2, a_3, a_4$ such that the corresponding polynomial $F$ has a non-isolated singularity. We denote this polynomial by $\ff$.
We summarize the results in Table~\ref{TabHSCIS}.
\begin{table}[h]
\begin{center}
\begin{tabular}{|c|c|c|}
\hline
Type   & $\ff$ & $(\widetilde{\ff}_1, \widetilde{\ff}_2)$\\
\hline
I &     $x^{p_1}+y^{p_2}+z^{p_3} -2 x^{\frac{p_1}{2}}y^{\frac{p_2}{2}}$  & $\left\{ \begin{array}{c} XY-W^2\\ X^{\frac{p_1}{2}}+Y^{\frac{p_2}{2}}+ Z^{p_3} \end{array}\right\}$ \\
IIA &   $x^{p_2}+xy^{\frac{p_3}{p_2}}+z^{p_1}  -2 x^{\frac{p_2+1}{2}}y^{\frac{p_3}{2p_2}}$ & 
$\left\{ \begin{array}{c} XY-W^2\\ XW+ Y^{\frac{p_3}{2p_2}}+Z^{p_1}\end{array}\right\}$\\
IIB &   $x^{p_1} + y^{p_2} + yz^{\frac{p_3}{p_2}} -2x^{\frac{p_1}{2}}y^{\frac{p_2}{2}}$ & $\left\{ \begin{array}{c} XY-W^2\\ X^{\frac{p_1}{2}}+Y^{\frac{p_2}{2}}Z+Z^{\frac{p_3}{p_2}} \end{array}\right\}$\\
${\rm IIB}^\sharp$ &  $-x^{\frac{p_1}{2}}z^{\frac{p_3}{2}}+y^2 + yz^{\frac{p_3}{2}} -x^{\frac{p_1}{2}}y$ & $\left\{ \begin{array}{c} XY-ZW\\ X^{\frac{p_1}{2}}+YW+Z^{\frac{p_3}{2}} \end{array}\right\}$\\
III &   $x^{q_2+1}y + xy^{q_3+1}+z^{p_1} - 2x^{\frac{q_2}{2}+1}y^{\frac{q_3}{2}+1}$ & $\left\{ \begin{array}{c} XY-W^2\\ (X^{\frac{q_2}{2}}+Y^{\frac{q_3}{2}})W+Z^{p_1} \end{array}\right\}$\\
IV &   $x^{p_1} + xy^{\frac{p_2}{p_1}} +yz^{\frac{p_3}{p_2}} -2x^{\frac{p_1+1}{2}}y^{\frac{p_2}{2p_1}}$ & $\left\{ \begin{array}{c} XY-W^2\\ X^{\frac{p_1-1}{2}}W+Y^{\frac{p_2}{2p_1}}Z+ Z^{\frac{p_3}{p_2}} \end{array}\right\}$\\
${\rm IV}^\sharp$ &  $-x^{\frac{p_1-1}{2}}z^{\frac{p_3}{p_2}}+xy^2+yz^{\frac{p_3}{p_2}}-x^{\frac{p_1+1}{2}}y$ & $\left\{ \begin{array}{c} XY-ZW\\ X^{\frac{p_1-1}{2}}W+YW+Z^{\frac{p_3}{p_2}}\end{array}\right\}$\\
\hline
\end{tabular}
\end{center}
\caption{Correspondence between polynomials $\ff$ and pairs of polynomials $(\widetilde{\ff}_1, \widetilde{\ff}_2)$} \label{TabHSCIS}
\end{table}

\section{Virtual singularities}
We now associate other equations to the polynomials from above. For each type, consider the polynomial  $\ff$ from Table~\ref{TabHSCIS} and assume that the conditions indicated in Table~\ref{TabCond} are satisfied. In all cases except ${\rm IIB}^\sharp$ and ${\rm IV}^\sharp$, the polynomial $\ff(x,y,z)$ is of the form
\[ 
\ff(x,y,z)= u(x,y,z)+v(x,y,z)(x-y^e)^2
\]
or
\[ 
\ff(x,y,z)= u(x,y,z)+v(x,y,z)(y-x^e)^2
\]
for some monomials $u(x,y,z)$ and $v(x,y,z)$ and some integer $e \geq 2$. We
consider the cusp singularity $\ff(x,y,z)-xyz$ and perform the coordinate change $x \mapsto x+y^e$ or $y \mapsto y+x^e$ respectively. The corresponding coordinate change is indicated in Table~\ref{TabCond}. Then $\ff(x,y,z)-xyz$ is transformed to $\hh(x,y,z)-xyz$ where $\hh(x,y,z)$ is indicated in the last column of Table~\ref{TabCond}. 
\begin{table}[h]
\begin{center}
\begin{tabular}{|c|c|c|c|}
\hline
Type  & Conditions & Coord.change & $\hh(x,y,z)$ \\
\hline
I &  $p_2=2$ & $y \mapsto y+x^{\frac{p_1}{2}}$ &  $-x^{\frac{p_1}{2}+1}z +y^2+z^{p_3}$\\
IIA & $p_2=3$ &  $x \mapsto x+y^{\frac{p_3}{6}}$ &  $-y^{\frac{p_3}{6}+1}z  +z^{p_1} +x^3+x^2y^{\frac{p_3}{6}}$\\
IIA & $\frac{p_3}{p_2}=2$ & $y \mapsto y+x^{\frac{p_2-1}{2}}$ & $-x^{\frac{p_2+1}{2}}z+z^{p_1}+xy^2$\\
IIB & $p_1=2$ & $x \mapsto x+y^{\frac{p_2}{2}}$ & $-y^{\frac{p_2}{2}+1}z + x^2+yz^{\frac{p_3}{p2}}$ \\
IIB &  $p_2=2$ & $y \mapsto y+x^{\frac{p_1}{2}}$ & $-x^{\frac{p_1}{2}+1}z+ y^2 + yz^{\frac{p_3}{2}} +x^{\frac{p_1}{2}}z^{\frac{p_3}{2}}$ \\
${\rm IIB}^\sharp$   & $p_2=2$ & $y \mapsto y+x^{\frac{p_1}{2}}$ & $-x^{\frac{p_1}{2}+1}z+y^2 + yz^{\frac{p_3}{2}} +x^{\frac{p_1}{2}}y$ \\
III  & $q_2=2$ & $x \mapsto x+y^{\frac{q_3}{2}}$ & $-y^{\frac{q_3}{2}+1}z +z^{p_1}  + x^3y +x^2y^{\frac{q_3}{2}+1}$ \\
${\rm IV}_1$   & $p_1=3$ & $x \mapsto x+y^{\frac{p_2}{6}}$ & $-y^{\frac{p_2}{6}+1}z+x^3+yz^{\frac{p_3}{p_2}}+x^2y^{\frac{p_2}{6}}$\\
${\rm IV}_2$ & $\frac{p_2}{p_1}=2$ & $y \mapsto y+x^{\frac{p_1-1}{2}}$ & $-x^{\frac{p_1+1}{2}}z+ xy^2 +yz^{\frac{p_3}{p_2}}+x^{\frac{p_1-1}{2}}z^{\frac{p_3}{p_2}}$ \\
${\rm IV}_2^\sharp$ & $\frac{p_2}{p_1}=2$ & $y \mapsto y+x^{\frac{p_1-1}{2}}$ & $-x^{\frac{p_1+1}{2}}z+ xy^2 +yz^{\frac{p_3}{p_2}}+x^{\frac{p_1+1}{2}}y$ \\
\hline
\end{tabular}
\end{center}
\caption{Conditions and transformations} \label{TabCond}
\end{table}

By inspection of Table~\ref{TabCond}, we see that some of the polynomials $\hh$ have 4 monomials and others only 3. We restrict our consideration to the cases where the polynomial $\hh$ has 4 monomials. These cases are listed in Table~\ref{TabVirt}.  The singularities defined by the polynomials $\hh(x,y,z)$ will be called {\em virtual singularities}. We consider the duality between the virtual singularities on one side and the complete intersection singularities on the other side. 

Let 
\[ \hh(x,y,z) = \sum_{i=1}^4 a_i x^{A_{i1}}y^{A_{i2}}z^{A_{i3}}
\]
be the polynomial defining a virtual singularity and let ${\rm Supp}(\hh)=\{ (A_{i1}, A_{i2}, A_{i3}) \in \ZZ^3 \, | \, i=1, \ldots ,4\}$.
Let $\Delta_\infty(\hh)$ be the Newton polygon of $\hh$ at infinity \cite{Ko}, i.e.\ $\Delta_\infty(\hh)$ is the convex closure in $\RR^n$ of ${\rm Supp}(\hh) \cup \{ 0 \}$. The Newton polygon $\Delta_\infty(\hh)$ has two faces which do not contain the origin.  
Call these faces $\Sigma_1$ and $\Sigma_2$. Let $I_k:= \{i \in \{1, \ldots, 4\} \, | \, (A_{i1}, A_{i2}, A_{i3}) \in \Sigma_k \}$, $k=1,2$, and let
\[
\hh_k= \sum_{i \in I_k} a_i x^{A_{i1}}y^{A_{i2}}z^{A_{i3}}.
\]
Then $\hh_k$ is an invertible polynomial with a non-isolated singularity at the origin. The polynomials $\hh_1$ and $\hh_2$ are listed in Table~\ref{TabVirt}. Their canonical systems of weights are reduced. One of the systems of weights of $\hh_1$ and $\hh_2$ coincides with the reduced system of weights of the non-degenerate invertible polynomial $f$ we started with. Let the numbering be chosen that this is the system of weights of $\hh_2$. The systems of weights are listed in Table~\ref{TabVirtWeight}.
\begin{table}[h]
\begin{center}
\begin{tabular}{|c|c|c|c|}
\hline
Type  & & $\hh_1(x,y,z)$  & $\hh_2(x,y,z)$\\
\hline
IIA & $p_2=3$ & $-y^{\frac{p_3}{6}+1}z+z^{p_1} +x^2y^{\frac{p_3}{6}}$ & $z^{p_1} +x^3 +x^2y^{\frac{p_3}{6}}$ \\
IIB & $p_2=2$ &  $-x^{\frac{p_1}{2}+1}z + y^2 +x^{\frac{p_1}{2}}z^{\frac{p_3}{2}}$  & $y^2 + yz^{\frac{p_3}{2}} +x^{\frac{p_1}{2}}z^{\frac{p_3}{2}}$ \\
${\rm IIB}^\sharp$  & $p_2=2$ & $-x^{\frac{p_1}{2}+1}z+ yz^{\frac{p_3}{2}} +x^{\frac{p_1}{2}}y$  & $y^2 + yz^{\frac{p_3}{2}} +x^{\frac{p_1}{2}}y$ \\
III  & $q_2=2$ &  $-y^{\frac{q_3}{2}+1}z + z^{p_1} +x^2y^{\frac{q_3}{2}+1}$  & $z^{p_1} + x^3y +x^2y^{\frac{q_3}{2}+1}$ \\
${\rm IV}_1$  & $p_1=3$ & $-y^{\frac{p_2}{6}+1}z+yz^{\frac{p_3}{p_2}}+x^2y^{\frac{p_2}{6}}$  & $x^3+yz^{\frac{p_3}{p_2}}+x^2y^{\frac{p_2}{6}}$  \\
${\rm IV}_2$  & $\frac{p_2}{p_1}=2$ & $-x^{\frac{p_1+1}{2}}z +xy^2+x^{\frac{p_1-1}{2}}z^{\frac{p_3}{p_2}}$   & $xy^2 +yz^{\frac{p_3}{p_2}}+x^{\frac{p_1-1}{2}}z^{\frac{p_3}{p_2}}$ \\
${\rm IV}_2^\sharp$ & $\frac{p_2}{p_1}=2$& $-x^{\frac{p_1+1}{2}}z +yz^{\frac{p_3}{p_2}}+x^{\frac{p_1+1}{2}}y$  & $xy^2 +yz^{\frac{p_3}{p_2}}+x^{\frac{p_1+1}{2}}y$ \\
\hline
\end{tabular}
\end{center}
\caption{Virtual singularities} \label{TabVirt}
\end{table}

\begin{table}[h]
\begin{center}
\begin{tabular}{|c|c|c|}
\hline
Type   & System of weights of $\hh_1$ & System of weights of $\hh_2$ \\
\hline
IIA  &  $\left(p_1+\frac{p_3}{6},2p_1-2,\frac{p_3}{3}+2;p_1 \left(\frac{p_3}{3}+2 \right)\right)$ &  $\left(  \frac{p_1p_3}{6}, p_1,\frac{p_3}{2}; \frac{p_1p_3}{2} \right)$ \\
IIB  & $(p_3-2, \frac{p_1p_3}{4}+\frac{p_3}{2}-\frac{p_1}{2},2; \frac{p_1p_3}{2}+p_3-p_1)$ &  $(\frac{p_3}{2}, \frac{p_1p_3}{4}, \frac{p_1}{2}; \frac{p_1p_3}{2})$\\
${\rm IIB}^\sharp$   & $(\frac{p_3}{2}, \frac{p_1}{2}+\frac{p_3}{2}, \frac{p_1}{2}; \frac{p_1p_3}{4}+\frac{p_3}{2}+\frac{p_1}{2})$ & $(\frac{p_3}{2}, \frac{p_1p_3}{4}, \frac{p_1}{2}; \frac{p_1p_3}{2})$ \\
III   & $(\frac{q_3}{2}+1,2p_1-2,q_3+2;p_1(q_3+2))$ & $(\frac{q_3}{2}p_1,p_1,3\frac{q_3}{2}+1;p_1(3\frac{q_3}{2}+1))$ \\
${\rm IV}_1$    & $(\frac{p_2}{6} +\frac{p_3}{p_2}-1,2\frac{p_3}{p_2}-2,\frac{p_2}{3};\frac{p_3}{3}+2\frac{p_3}{p_2}-2)$ & $(\frac{p_3}{6},\frac{p_3}{p_2}, \frac{p_2}{2}-1;\frac{p_3}{2})$ \\
${\rm IV}_2$  &  $(2\frac{p_3}{p_2}-2, \frac{p_3}{4}- \frac{p_3}{2p_2} - \frac{p_1}{2}+\frac{3}{2},2;\frac{p_3}{2}+ \frac{p_3}{p_2}-p_1+1)$ & $(\frac{p_3}{p_2}, (p_1-1)\frac{p_3}{2p_2}, \frac{p_1+1}{2}; \frac{p_3}{2})$ \\
${\rm IV}_2^\sharp$ & $(\frac{p_3}{p_2}, \frac{p_1+1}{2},  \frac{p_1+1}{2}; \frac{p_3}{4}+\frac{p_3}{2p_2}+ \frac{p_1}{2} + \frac{1}{2})$ &   $(\frac{p_3}{p_2}, (p_1-1)\frac{p_3}{2p_2}, \frac{p_1+1}{2}; \frac{p_3}{2})$ \\
\hline
\end{tabular}
\end{center}
\caption{Systems of weights corresponding to the virtual singularities} \label{TabVirtWeight}
\end{table}

The dual complete intersection singularity defined by $\widetilde{\ff}_1=\widetilde{\ff}_2=0$ is weighted homogeneous.
We list the systems of weights of these complete intersection singularities in Table~\ref{TabCISWeight}. \begin{table}[h]
\begin{center}
\begin{tabular}{|c|c|}
\hline
Type   & System of weights \\
\hline
IIA  & $(p_1(\frac{p_3}{3}-1), 3p_1, \frac{p_3}{2}, \frac{p_1}{2}(\frac{p_3}{3}+2);p_1(\frac{p_3}{3}+2) ,\frac{p_1p_3}{2})$ \\
IIB  & $(p_3,\frac{p_1p_3}{2}-p_1,p_1,\frac{p_1p_3}{4}+\frac{p_3}{2}-\frac{p_1}{2}; \frac{p_1p_3}{2}+p_3-p_1, \frac{p_1p_3}{2})$\\
${\rm IIB}^\sharp$   & $(p_3, \frac{p_1p_3}{4}+\frac{p_1}{2}-\frac{p_3}{2},p_1,\frac{p_1p_3}{4}+\frac{p_3}{2}-\frac{p_1}{2}; \frac{p_1p_3}{4}+\frac{p_3}{2}+\frac{p_1}{2}, \frac{p_1p_3}{2})$ \\
III   & $( p_1q_3,2p_1, 3\frac{q_3}{2}+1, (\frac{q_3}{2}+1)p_1;p_1(q_3+2), p_1(3\frac{q_3}{2}+1))$ \\
${\rm IV}_1$    & $(\frac{p_3}{3}-\frac{p_3}{p_2}+1,3(\frac{p_3}{p_2}-1), \frac{p_2}{2}, \frac{p_3}{6}+\frac{p_3}{p_2}-1 ;\frac{p_3}{3}+2\frac{p_3}{p_2}-2,\frac{p_3}{2})$ \\
${\rm IV}_2$  &  $(\frac{p_3}{p_2}+1,\frac{p_3}{2}-p_1,p_1,\frac{p_3}{4}+ \frac{p_3}{2p_2}-\frac{p_1}{2}+\frac{1}{2};\frac{p_3}{2}+ \frac{p_3}{p_2}-p_1+1, \frac{p_3}{2})$ \\
${\rm IV}_2^\sharp$ & $(\frac{p_3}{p_2}+1,\frac{p_3}{4}- \frac{p_3}{2p_2}+\frac{p_1}{2}-\frac{1}{2},p_1,\frac{p_3}{4}+ \frac{p_3}{2p_2}-\frac{p_1}{2}+\frac{1}{2}; \frac{p_3}{4}+\frac{p_3}{2p_2}+ \frac{p_1}{2} + \frac{1}{2}, \frac{p_3}{2})$ \\
\hline
\end{tabular}
\end{center}
\caption{Systems of weights of the pairs $(\widetilde{\ff}_1,\widetilde{\ff}_2)$} \label{TabCISWeight}
\end{table}
It turns out that the degrees of the systems of weights of $\hh_1$ and $\hh_2$ coincide with the degrees of the two polynomials $\widetilde{\ff}_1$ and $\widetilde{\ff}_2$ respectively.

\section{Dolgachev and Gabrielov numbers} \label{SectDolGab}
We shall now define Dolgachev and Gabrielov numbers for the polynomials $\hh$ and and the pairs of polynomials $(\widetilde{\ff}_1,\widetilde{\ff}_2)$ occurring in our duality.

We first define these numbers for the pairs $(\widetilde{\ff}_1,\widetilde{\ff}_2)$.
Let $X_{\widetilde{\ff}_1,\widetilde{\ff}_2} \subset \CC^4$ be the weighted homogeneous complete intersection in $\CC^4$ defined by the  two equations $\widetilde{\ff}_1(W,X,Y,Z)=\widetilde{\ff}_2(W,X,Y,Z)=0$, where $\widetilde{\ff}_1(W,X,Y,Z)=XY-W^2$ or $\widetilde{\ff}_1(W,X,Y,Z)=XY-ZW$. 

\begin{definition} Let $\calC_{\widetilde{\ff}_1,\widetilde{\ff}_2} := [(X_{\widetilde{\ff}_1,\widetilde{\ff}_2} \setminus \{0\})/\CC^\ast]$. Then $\calC_{\widetilde{\ff}_1,\widetilde{\ff}_2}$ is a smooth projective curve with three isotropic points of orders $\alpha_1, \alpha_2, \alpha_3$. We call these numbers the {\em Dolgachev numbers} of the pair $(\widetilde{\ff}_1,\widetilde{\ff}_2)$.
\end{definition}

The Gabrielov numbers are defined similarly as in the hypersurface case. We consider the complete intersection singularity $(X',0)$ defined by
\[  \left\{ \begin{array}{c} \widetilde{\ff}_1(W,X,Y,Z), \\
\widetilde{\ff}_2(W,X,Y,Z) -ZW. \end{array} \right. \]
As in \cite{ET1} one can show that one can find a holomorphic change of coordinates such that the singularity $(X',0)$ is also given by equations of the form
\[  \left\{ \begin{array}{c} XY-Z^{\gamma_1} - W^{\gamma_2},\\
 X^{\gamma_3} + Y^{\gamma_4} -ZW.
 \end{array} \right. \]
This means that $(X',0)$ is a cusp singularity of type $T^2_{\gamma_1,\gamma_3, \gamma_2, \gamma_4}$ in the notation of \cite[3.1]{E1}.

\begin{definition} The {\em Gabrielov numbers} of the pair $(\widetilde{\ff}_1,\widetilde{\ff}_2)$ are the numbers $(\gamma_1,\gamma_2; \gamma_3, \gamma_4)$.
\end{definition}

The Dolgachev and Gabrielov numbers for the pairs $(\widetilde{\ff}_1,\widetilde{\ff}_2)$ of Table~\ref{TabHSCIS} are indicated in Table~\ref{TabCISDolGab}.
\begin{table}[h]
\begin{center}
\begin{tabular}{|c|c|c|}
\hline
Type  & Dolgachev & Gabrielov\\
\hline
I &  $p_1,p_2,p_3$ & $2, 2p_3-2; \frac{p_1}{2}, \frac{p_2}{2}$  \\
IIA  & $p_1,p_2,\left( \frac{p_3}{p_2}-1 \right)p_1$ & $2, 2p_1-2; \frac{p_1(p_2-1)}{2}, \frac{p_3}{2p_2}$ \\
IIB  & $p_1,p_2,\left( \frac{p_3}{p_2}-1 \right)p_1$ & $2, 2\frac{p_3}{p_2}-2; \frac{p_1}{2}, \frac{p_1(p_2-1)}{2}$ \\
${\rm IIB}^\sharp$  & $\frac{p_3}{2} \left( \frac{p_1}{2}-1 \right)+ \frac{p_1}{2}, 2, \frac{p_1}{2} \left( \frac{p_3}{2}-1 \right) + \frac{p_3}{2}$  & $\frac{p_3}{2}, \frac{p_1}{2}; \frac{p_3}{2},\frac{p_1}{2}$ \\
III  & $p_1,p_1q_2, p_1q_3$ & $2, 2p_1-2; p_1, \frac{q_3}{2}p_1$  \\
IV  & $p_1,\left( \frac{p_3}{p_2}-1 \right)p_1, \frac{p_3}{p_1}-\frac{p_3}{p_2}+1$ & $2, 2\frac{p_3}{p_2}-2; \frac{1}{2} \frac{p_3}{p_2}(p_1-1), \frac{1}{2}(p_2-p_1+1)$ \\
${\rm IV}^\sharp$   & $\frac{p_1-1}{2} \left( \frac{p_3}{p_2}+1 \right), \frac{p_3}{p_2}+1, \frac{p_1+1}{2} \left( \frac{p_3}{p_2}-1 \right)+1$ & $\frac{p_3}{p_2},\frac{1}{2}(p_1+1); \frac{p_3}{p_2}, \frac{1}{2} \frac{p_3}{p_2}(p_1-1)$ \\
\hline
\end{tabular}
\end{center}
\caption{Pairs $(\widetilde{\ff}_1,\widetilde{\ff}_2)$: Dolgachev and Gabrielov numbers} \label{TabCISDolGab}
\end{table}

Now let $\hh$ be the polynomial of Table~\ref{TabVirt} defining a virtual singularity.
The Gabrielov numbers of $\hh$ are defined as in \cite{ET1}. Namely, we consider the polynomial
$\hh(x,y,z) -xyz$.
As in \cite{ET1}, one can show that one can find a holomorphic change of coordinates at the origin such that this polynomial becomes
\[ x^{\gamma_1} + y^{\gamma_2} + z^{\gamma_3} - xyz .\]
We define the {\em Gabrielov numbers} of $\hh$ to be the triple $(\gamma_1, \gamma_2, \gamma_3)$. 

The Dolgachev numbers of the polynomial $\hh$ are defined as follows. We associated to $\hh$ two weighted homogeneous polynomials $\hh_1$ and $\hh_2$. Let $i=1,2$ and let
$V_i:= \{ (x,y,z) \in \CC^3 \, | \, \hh_i(x,y,z)=0 \}$.
We consider the $\CC^\ast$-action on $V_i$ given by the system of weights of $\hh_i$ (see Table~\ref{TabVirtWeight}). We consider the exceptional orbits (i.e.\ orbits with a non-trivial isotropy group) of this action. We distinguish between two cases:
\begin{itemize}
\item[(A)] $V_i$ contains a coordinate hyperplane.
\item[(B)] $V_i$ does not contain a coordinate hyperplane.
\end{itemize}
In case (A) we consider those exceptional orbits which are not contained in the coordinate hyperplane which is contained in $V_i$. In case (B) we consider those exceptional orbits which do not coincide with the singular locus of $V_i$. We call these the {\em principal} orbits. It turns out that in all cases we have exactly two principal orbits. 

\begin{example} (a) IIA ($p_2=3$): $\hh_1(x,y,z)=-y^{\frac{p_3}{6}+1}z+z^{p_1} +x^2y^{\frac{p_3}{6}}$ with the system of weights $\left(p_1+\frac{p_3}{6},2p_1-2,\frac{p_3}{3}+2;p_1 \left(\frac{p_3}{3}+2 \right)\right)$. The exceptional orbits are:\\
\begin{tabular}{cl}
$y=z=0$ & singular line\\
$x=-y^{\frac{p_3}{6}+1}z+z^{p_1}=0$ & order of isotropy group: 2 \\
$x=z=0$ & order of isotropy group: $2p_1-2$
\end{tabular}

(b) ${\rm IV}_2$ ($\frac{p_2}{p_1}=2$): $\hh_1(x,y,z)=-x^{\frac{p_1+1}{2}}z +xy^2+x^{\frac{p_1-1}{2}}z^{\frac{p_3}{p_2}}=x(-x^{\frac{p_1-1}{2}}z +y^2+x^{\frac{p_1-3}{2}}z^{\frac{p_3}{p_2}})$ with the system of weights $(2\frac{p_3}{p_2}-2, \frac{p_3}{4}- \frac{p_3}{2p_2} - \frac{p_1}{2}+\frac{3}{2},2;\frac{p_3}{2}+ \frac{p_3}{p_2}-p_1+1)$. The exceptional orbits not contained in the hyperplane $x=0$ are:\\
\begin{tabular}{cl}
$y=z=0$ & order of isotropy group: $2\frac{p_3}{p_2}-2$\\
$y=-x^{\frac{p_1+1}{2}}z+x^{\frac{p_1-1}{2}}z^{\frac{p_3}{p_2}}=0$ & order of isotropy group: 2
\end{tabular}
\end{example}

\begin{definition} The {\em Dolgachev numbers} of $\hh$ are the numbers $\alpha_1, \alpha_2; \alpha_3, \alpha_4$ where $\alpha_1, \alpha_2$ and $\alpha_3, \alpha_4$ are the orders of the isotropy groups of the principal exceptional orbits of $\hh_1$ and $\hh_2$ respectively.
\end{definition}

We list the Dolgachev and Gabrielov numbers of the polynomials $\hh$ corresponding to the virtual singularities in Table~\ref{TabHSDolGab}.
\begin{table}[h]
\begin{center}
\begin{tabular}{|c|c|c|}
\hline
Type  & Dolgachev & Gabrielov\\
\hline
IIA &  $2, 2p_1-2; p_1, \frac{p_3}{6}$ & $p_1,3,\left( \frac{p_3}{3}-1 \right)p_1$\\
IIB &  $2, 2\frac{p_3}{p_2}-2; \frac{p_1}{2}, \frac{p_1}{2}$ & $p_1,2,\left( \frac{p_3}{2}-1 \right)p_1$ \\
${\rm IIB}^\sharp$   & $\frac{p_3}{2}, \frac{p_1}{2}; \frac{p_3}{2},\frac{p_1}{2}$ & $\frac{p_3}{2} \left( \frac{p_1}{2}-1 \right)+ \frac{p_1}{2}, 2, \frac{p_1}{2} \left( \frac{p_3}{2}-1 \right) + \frac{p_3}{2}$ \\
III  & $2, 2p_1-2; p_1, \frac{q_3}{2}p_1$ & $p_1,2p_1, q_3p_1$ \\
${\rm IV}_1$ & $2, 2\frac{p_3}{p_2}-2; \frac{p_3}{p_2}, \frac{1}{2}(p_2-2)$ & $3,3(\frac{p_3}{p_2}-1),\frac{p_3}{3}-\frac{p_3}{p_2}+1$ \\
${\rm IV}_2$   & $2, 2\frac{p_3}{p_2}-2; \frac{1}{2} \frac{p_3}{p_2}(p_1-1), \frac{1}{2}(p_1+1)$ & $p_1,\left( \frac{p_3}{p_2}-1 \right)p_1, \frac{p_3}{p_1}-\frac{p_3}{p_2}+1$\\
${\rm IV}_2^\sharp$ &  $\frac{p_3}{p_2},\frac{1}{2}(p_1+1); \frac{p_3}{p_2}, \frac{1}{2} \frac{p_3}{p_2}(p_1-1)$  & $\frac{p_1-1}{2} \left( \frac{p_3}{p_2}+1 \right), \frac{p_3}{p_2}+1, \frac{p_1+1}{2} \left( \frac{p_3}{p_2}-1 \right)+1$\\
\hline
\end{tabular}
\end{center}
\caption{Virtual singularities: Dolgachev and Gabrielov numbers} \label{TabHSDolGab}
\end{table}

\section{Strange duality}

Comparing Table~\ref{TabHSDolGab} with Table~\ref{TabCISDolGab}, we obtain the following result.

\begin{theorem} \label{thm:duality}
The Gabrielov numbers of the polynomial $\hh$ corresponding to a virtual singularity  coincide with the Dolgachev numbers of the dual pair $(\widetilde{\ff}_1,\widetilde{\ff}_2)$ and, vice versa, the Gabrielov numbers of a pair $(\widetilde{\ff}_1,\widetilde{\ff}_2)$ coincide with the Dolgachev numbers of the dual polynomial $\hh$.
\end{theorem}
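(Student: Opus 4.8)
The plan is to prove Theorem~\ref{thm:duality} essentially by direct comparison of the two tables, but organized so that the comparison is conceptual rather than a mere table lookup. First I would observe that the theorem asserts two symmetric statements, one for each direction of the duality, and that the bulk of the content is already encoded in Tables~\ref{TabCISDolGab} and~\ref{TabHSDolGab}: for each type one must check that the Gabrielov column of Table~\ref{TabHSDolGab} matches the Dolgachev column of Table~\ref{TabCISDolGab}, and conversely that the Dolgachev column of Table~\ref{TabHSDolGab} matches the Gabrielov column of Table~\ref{TabCISDolGab}. So the heart of the proof is to \emph{justify the entries of those two tables}, since once both tables are correctly computed the coincidence is immediate by inspection.

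Accordingly I would structure the argument in two computational blocks. The first block establishes the entries of Table~\ref{TabCISDolGab}: the Dolgachev numbers come from analyzing the three isotropic points of the quotient curve $\calC_{\widetilde{\ff}_1,\widetilde{\ff}_2}=[(X_{\widetilde{\ff}_1,\widetilde{\ff}_2}\setminus\{0\})/\CC^\ast]$ using the systems of weights recorded in Table~\ref{TabCISWeight}, computing the isotropy orders at the coordinate-type orbits; the Gabrielov numbers come from putting the cusp singularity $(X',0)$ into the normal form $XY-Z^{\gamma_1}-W^{\gamma_2}$, $X^{\gamma_3}+Y^{\gamma_4}-ZW$ by an explicit holomorphic coordinate change, exactly following the method of \cite{ET1}. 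The second block does the analogous work for Table~\ref{TabHSDolGab}: the Dolgachev numbers are read off as the isotropy orders of the two principal exceptional orbits of $\hh_1$ and $\hh_2$ (the orbit analysis is illustrated for types IIA and ${\rm IV}_2$ in the Example), and the Gabrielov numbers are obtained by transforming $\hh(x,y,z)-xyz$ into the normal form $x^{\gamma_1}+y^{\gamma_2}+z^{\gamma_3}-xyz$, again via the technique of \cite{ET1}.

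With both tables verified, the proof concludes by a type-by-type matching. I would present this as a short case check over the seven types IIA, IIB, ${\rm IIB}^\sharp$, III, ${\rm IV}_1$, ${\rm IV}_2$, ${\rm IV}_2^\sharp$, noting in each case the identity between the relevant columns; for instance in type IIB the virtual Gabrielov numbers $p_1,2,(\frac{p_3}{2}-1)p_1$ coincide with the complete intersection Dolgachev numbers $p_1,p_2,(\frac{p_3}{p_2}-1)p_1$ under the running hypothesis $p_2=2$, and symmetrically the virtual Dolgachev numbers match the complete intersection Gabrielov numbers. The self-dual pattern of types ${\rm IIB}^\sharp$ and ${\rm IV}_2^\sharp$, where the two pairs of numbers are literally interchanged, deserves an explicit remark since it transparently exhibits the strange-duality symmetry.

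The main obstacle is not the final matching, which is purely a comparison once the tables are in hand, but the computation of the Gabrielov numbers in both directions, because it relies on producing explicit holomorphic coordinate changes that bring a cusp singularity into the prescribed $T$-type normal form. For the complete intersection side this means realizing $\widetilde{\ff}_2-ZW$ together with $\widetilde{\ff}_1$ as a $T^2_{\gamma_1,\gamma_3,\gamma_2,\gamma_4}$ cusp; for the hypersurface side it means reducing $\hh(x,y,z)-xyz$ to triangle form. Both steps are where the analytic work of \cite{ET1} and \cite{E1} is genuinely needed, and where one must verify that the non-isolated character of $\hh_1,\hh_2$ and the separation of the Newton polygon $\Delta_\infty(\hh)$ into its two faces $\Sigma_1,\Sigma_2$ interact correctly with the weight data of Table~\ref{TabVirtWeight}. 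Once these normal forms are in place the $\gamma_i$ are read off directly and the theorem follows.
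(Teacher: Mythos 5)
Your proposal matches the paper's own argument: the paper proves Theorem~\ref{thm:duality} precisely by computing the entries of Tables~\ref{TabHSDolGab} and~\ref{TabCISDolGab} in Section~\ref{SectDolGab} (Dolgachev numbers via isotropy orders of the principal exceptional orbits resp.\ the isotropic points of $\calC_{\widetilde{\ff}_1,\widetilde{\ff}_2}$, Gabrielov numbers via the coordinate changes of \cite{ET1} to cusp normal form) and then observing that the columns coincide. Your type-by-type matching, including the IIB check under $p_2=2$ and the self-dual behaviour of the $\sharp$-cases, is exactly the comparison the paper invokes, so the proposal is correct and essentially identical to the paper's proof.
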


Let $f_1$, \dots , $f_k$ be quasihomogeneous functions on $\CC^n$  of
degrees $d_1$, \dots , $d_k$ with respect to weights $w_1$, \dots, $w_n$. Here $w_1$, \dots,
$w_n$ are positive integers with $\gcd(w_1, \ldots, w_n)=1$, 
$f_j(\lambda^{w_1}x_1, \ldots, \lambda^{w_n}x_n)=
\lambda^{d_j}f_j(x_1, \ldots, x_n)$, $\lambda\in\CC$.
We suppose that the equations $f_1=f_2= \ldots = f_k=0$ define a complete intersection $X$ in
$\CC^n$. There is a natural $\CC^\ast$-action on the space $\CC^n$ defined by 
$\lambda\ast(x_1, \ldots, x_n)=
(\lambda^{w_1}x_1, \ldots, \lambda^{w_n}x_n)$, $\lambda \in \CC^\ast$.

Let
$A=\CC[x]/(f_1, \ldots , f_k)$ be the coordinate ring of $X$. There is a 
natural grading on the ring $A$: $A_s$ is the set of functions
$g\in A$ such that $g(\lambda\ast x)=\lambda^s g(x)$. Let
$P_X(t)=\sum_{s=0}^\infty \dim A_s \cdot t^s$ be the Poincar\'e series
of the graded algebra $A=\oplus_{s=0}^\infty A_s$. One has
\begin{equation}
P_X(t) = \frac{ \prod_{j=1}^k (1-t^{d_j}) }{\prod_{i=1}^n (1-t^{w_i})}.
\label{eq:poinc}
\end{equation}

For $0 \leq j \leq k$, let $X^{(j)}$ be the complete intersection given by the equations $f_1=
\ldots = f_j=0$ ($X^{(0)}=\CC^n$, $X^{(k)}=X$). The restriction of the function $f_j$ ($j=1,
\ldots , k$) to the variety $X^{(j-1)}$ defines a locally trivial fibration $X^{(j-1)} \setminus
X^{(j)} \to \CC^\ast$. Let $V^{(j)} = f_j^{-1}(1) \cap X^{(j-1)}$ be the (Milnor) fibre of this
fibration (the fibre $V^{(j)}$ is not necessarily smooth) and $\varphi^{(j)} : V^{(j)} \to V^{(j)}$  be
the classical monodromy transformation of it. For a map $\varphi: Z \to Z$ of a topological space $Z$,
let
$\zeta_{\varphi}(t)$ be its {\em zeta function}
$$\zeta_{\varphi}(t)=\prod_{p\ge0}
\left\{\det \left( \mbox{id} -t\cdot {\varphi}_\ast\vert_{H_p(Z;\CC)}\right)\right\}^{(-1)^p}.$$
If, in the definition, we use the actions of the operators ${\varphi}_\ast$ on the homology groups
$\overline{H}_p(Z;\ZZ)$ reduced modulo a point, we get the {\em reduced} zeta function
$$
\overline\zeta_{\varphi}(t)  =  \frac{\zeta_{\varphi}(t)}{(1-t)}. 
$$
Let
$$\overline\zeta_{X,j}(t) := \overline\zeta_{{\varphi}^{(j)}}(t).$$
If both  $X^{(j)}$ and $X^{(j-1)}$ have isolated singularities at the origin
then $\overline{H}_p(V^{(j)};\ZZ)$ is non-trivial only for $p=n-j$ and therefore, if $n-j \ge 1$,
$$\left(\overline\zeta_{X,j}(t)\right)^{(-1)^{n-j}}=
\det \left(\mbox{id}-t\cdot {\varphi}^{(j)}_\ast\vert_{H_{n-j}(V^{(j)};\CC)}\right)$$
is the characteristic polynomial of the classical monodromy
operator ${\varphi}_\ast^{(j)}$.

One can show that $({\varphi}^{(j)}_\ast)^{d_j} = \mbox{id}$ and therefore 
$\overline\zeta_{X,j}(t)$ can be written in the form
$$
\prod_{\ell\vert d_j}(1-t^\ell)^{\alpha_\ell}, \ \alpha_\ell\in\ZZ.
$$
Following K.~Saito \cite{S1, S2}, we define the Saito dual to $\overline\zeta_{X,j}(t)$ to be the
rational function 
$$
\overline\zeta_{X,j}^\ast(t)=\prod_{m\vert d_j}(1-t^m)^{-\alpha_{(d_j/m)}}
$$
(note that different degrees $d_j$ are used for different $j$).

Let $Y^{(k)}=(X^{(k)} \setminus\{0\})/\CC^\ast$ be the space of orbits of the
$\CC^\ast$-action on $X^{(k)}\setminus\{0\}$ and
$Y^{(k)}_m$ be the set of orbits for which the isotropy group is the
cyclic group of order $m$. Let
$$\mbox{Or}_X(t) := \prod_{m \geq 1} (1-t^m)^{\chi(Y_m^{(k)})}$$
be the product of cyclotomic polynomials with exponents corresponding to the partition of the
complete intersection $X=X^{(k)}$ into parts of different orbit types; here $\chi(Z)$ denotes the
Euler characteristic of a topological space $Z$.

Let $(X,0)$ be the virtual hypersurface singularity defined by $\hh=0$ and $(\widetilde{X},0)$ be the dual complete intersection singularity given by the equations $\widetilde{\ff}_1=\widetilde{\ff}_2=0$ according to Theorem~\ref{thm:duality}. The function $\mbox{Or}_{\widetilde{X}}(t)$ is equal to  the polynomial
$$\mbox{Or}_{\widetilde{X}}(t) := \prod_{k=1}^3 (1-t^{\alpha_k}) \cdot (1-t)^{-1},$$
where $\alpha_1, \alpha_2, \alpha_3$ are the Dolgachev numbers of the pair  $(\widetilde{\ff}_1,\widetilde{\ff}_2)$, see Sect.~\ref{SectDolGab}.

Finally, let $\zeta_X(t)$ be the zeta function of the monodromy at infinity of $\hh$ and
$$
\overline\zeta_X(t)  =  \frac{\zeta_X(t)}{(1-t)}. 
$$
be the reduced zeta function of $\hh$. 

\begin{theorem} \label{thm:ZetaHS}
Under the conditions of Table~\ref{TabVirt}  we have
\[
\overline{\zeta}_X(t)= P_{\widetilde{X}}(t) \cdot {\rm Or}_{\widetilde{X}}(t).
\]
\end{theorem}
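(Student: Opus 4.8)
The plan is to prove the identity one type at a time, by computing each side of
\[
\overline{\zeta}_X(t)= P_{\widetilde{X}}(t) \cdot {\rm Or}_{\widetilde{X}}(t)
\]
as an explicit rational function in the parameters ($p_1,p_2,p_3$, resp.\ $q_2,q_3$) of the row of Table~\ref{TabVirt} under consideration. The right-hand side requires no new work: by \eqref{eq:poinc},
\[
P_{\widetilde{X}}(t)=\frac{(1-t^{d_1})(1-t^{d_2})}{\prod_{i=1}^{4}(1-t^{w_i})},
\]
with $(w_1,\dots,w_4;d_1,d_2)$ the system of weights of $(\widetilde{\ff}_1,\widetilde{\ff}_2)$ read off Table~\ref{TabCISWeight}, and, as recorded just before the statement, ${\rm Or}_{\widetilde{X}}(t)=\prod_{k=1}^{3}(1-t^{\alpha_k})\cdot(1-t)^{-1}$ with $\alpha_1,\alpha_2,\alpha_3$ the Dolgachev numbers of the pair from Table~\ref{TabCISDolGab}. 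Thus the entire mathematical content lies in producing a matching closed formula for the left-hand side, the reduced zeta function of the monodromy at infinity of $\hh$.

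For the left-hand side I would exploit that, precisely in the four-monomial cases of Table~\ref{TabVirt}, the polynomial $\hh$ is non-degenerate with respect to its Newton polygon at infinity $\Delta_\infty(\hh)$, which has exactly two two-dimensional faces $\Sigma_1,\Sigma_2$ carrying the invertible weighted homogeneous polynomials $\hh_1,\hh_2$. I would then compute $\zeta_X(t)$ by A'Campo's formula on a toric modification of $\CC^3$ adapted to $\Delta_\infty(\hh)$, in the spirit of the Newton-polygon computations of \cite{Ko} and \cite{ET1}: the exceptional divisors correspond to the faces and edges of $\Delta_\infty(\hh)$, their multiplicities are determined by the reduced weight systems of Table~\ref{TabVirtWeight}, and the Euler characteristics of the open strata are governed by the orbit decomposition of the curves $V(\hh_k)$ — that is, by the principal exceptional orbits whose isotropy orders are, by definition, the Dolgachev numbers of $\hh$ (Section~\ref{SectDolGab}).

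Assembling these contributions, the two top faces produce numerator factors $(1-t^{d^{(1)}})(1-t^{d^{(2)}})$ with $d^{(k)}=\deg \hh_k$; since these degrees coincide with $\deg\widetilde{\ff}_1,\deg\widetilde{\ff}_2$ (the coincidence of degrees noted at the end of Section~3), they already match the numerator of $P_{\widetilde{X}}$. The weight data and the contribution of the common edge of $\Sigma_1,\Sigma_2$ reorganise into the denominator $\prod_i(1-t^{w_i})$ together with the remaining cyclotomic factors, in which the Dolgachev numbers of $\hh$ appear as exponents. The final step is then a purely cyclotomic simplification, verified row by row against Tables~\ref{TabVirtWeight}, \ref{TabCISWeight}, \ref{TabCISDolGab} and \ref{TabHSDolGab}; here the Dolgachev–Gabrielov duality of Theorem~\ref{thm:duality} (which identifies the orders $\alpha_k$ entering ${\rm Or}_{\widetilde{X}}$ with the Gabrielov numbers of $\hh$) serves as the bookkeeping device that forces the competing factors to cancel correctly.

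The main obstacle is the left-hand side, namely obtaining the monodromy-at-infinity formula in a form precise enough to recover the \emph{exact} rational function $P_{\widetilde{X}}\cdot{\rm Or}_{\widetilde{X}}$, rather than merely a zeta function with the same zeros and poles. The two delicate points are the contribution of the shared edge of $\Sigma_1$ and $\Sigma_2$, which forces a correction to any naive product-over-faces expression, and the fact that $\hh_1,\hh_2$ have non-isolated singularities, so that the Milnor fibres at infinity are not bouquets of spheres and the Euler-characteristic bookkeeping must be carried out stratum by stratum. I would control both difficulties through the toric-modification A'Campo computation, reading the exceptional multiplicities off the weight systems and the stratum Euler characteristics off the principal-orbit description of Section~\ref{SectDolGab}.
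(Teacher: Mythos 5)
Your proposal is correct and matches the paper's own proof: the paper likewise computes $\overline{\zeta}_X(t)$ from the Newton polygon of $\hh$ at infinity---citing the Libgober--Sperber formula \cite{LS}, which is exactly what your toric-modification/A'Campo computation would re-derive---and then verifies the identity case by case against Tables~\ref{TabVirtWeight}, \ref{TabCISWeight} and \ref{TabCISDolGab}. Apart from unpacking the citation to \cite{LS} into its proof, this is essentially the same approach.
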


\begin{proof}
The zeta function $\zeta_X(t)$ can be computed from the Newton polygon of $\hh$ at infinity by \cite{LS}. From Tables~\ref{TabVirtWeight}, \ref{TabCISWeight}, and \ref{TabCISDolGab} we can derive the formula.
\end{proof}

From \cite{EG} we get the following corollary:

\begin{corollary} 
Under the conditions of Table~\ref{TabVirt}  we have 
\[
\overline{\zeta}_X(t) = \overline{\zeta}^\ast_{\widetilde{X},1}(t) \cdot \overline{\zeta}^\ast_{\widetilde{X},2}(t).
\]
\end{corollary}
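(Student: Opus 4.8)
The plan is to derive the Corollary directly from Theorem~\ref{thm:ZetaHS} by feeding its right-hand side into the general identity of \cite{EG}, which expresses the product of the Poincaré series and the orbit polynomial of a weighted homogeneous complete intersection in terms of the Saito duals of its monodromy zeta functions. Theorem~\ref{thm:ZetaHS} already supplies
\[
\overline{\zeta}_X(t) = P_{\widetilde{X}}(t) \cdot \mathrm{Or}_{\widetilde{X}}(t),
\]
so the entire task reduces to rewriting $P_{\widetilde{X}}(t)\cdot\mathrm{Or}_{\widetilde{X}}(t)$ as $\overline{\zeta}^\ast_{\widetilde{X},1}(t)\cdot\overline{\zeta}^\ast_{\widetilde{X},2}(t)$.

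The key external input is the formula of \cite{EG}, namely
\[
P_{\widetilde{X}}(t)\cdot\mathrm{Or}_{\widetilde{X}}(t)=\prod_{j=1}^{k}\overline{\zeta}^\ast_{\widetilde{X},j}(t),
\]
valid for a quasihomogeneous complete intersection $\widetilde{X}\subset\CC^n$ cut out by $k$ quasihomogeneous equations, specialized here to $k=2$. I would first recall how its three ingredients assemble from the data already fixed before Theorem~\ref{thm:ZetaHS}: the Poincaré series is the product \eqref{eq:poinc} in the reduced weights $w_i$ and degrees $d_1,d_2$; each Saito dual $\overline{\zeta}^\ast_{\widetilde{X},j}$ is obtained from $\overline{\zeta}_{\widetilde{X},j}=\prod_{\ell\mid d_j}(1-t^\ell)^{\alpha_\ell}$ by the involution $\ell\mapsto d_j/\ell$ on the divisors of $d_j$; and $\mathrm{Or}_{\widetilde{X}}$ records the partition of the orbit space $Y^{(k)}$ into orbit types, which by the text is $\prod_{k=1}^3(1-t^{\alpha_k})\cdot(1-t)^{-1}$ with the $\alpha_k$ the Dolgachev numbers of Table~\ref{TabCISDolGab}. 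Substituting the \cite{EG} identity into Theorem~\ref{thm:ZetaHS} then yields
\[
\overline{\zeta}_X(t)=\overline{\zeta}^\ast_{\widetilde{X},1}(t)\cdot\overline{\zeta}^\ast_{\widetilde{X},2}(t)
\]
at once. Note that this target identity in fact \emph{forces} the displayed form of the \cite{EG} formula, which gives a useful internal consistency check on orientation and normalization.

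The step requiring genuine care is confirming that $\widetilde{X}$ satisfies the hypotheses under which \cite{EG} is stated. The weights in Table~\ref{TabCISWeight} are reduced and the pair $(\widetilde{\ff}_1,\widetilde{\ff}_2)$ cuts out a complete intersection, so the graded coordinate ring and its Poincaré series \eqref{eq:poinc} are well defined. The subtlety is that the intermediate variety $X^{(1)}=\{\widetilde{\ff}_1=0\}$, a quadric such as $XY-W^2=0$, need not have an isolated singularity at the origin; hence $\overline{\zeta}_{\widetilde{X},1}(t)$ must be understood in the general monodromy-fibration sense recalled above (the fibration $X^{(0)}\setminus X^{(1)}\to\CC^\ast$), not merely as a characteristic polynomial. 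I would therefore check that the identity of \cite{EG} is proved in exactly this generality, so that no isolatedness of the $X^{(j)}$ is invoked. I expect this to be the only real obstacle, and it is one of citation precision rather than of mathematics: once it is granted that \cite{EG} applies in the non-isolated intermediate setting, the Corollary is immediate, and a type-by-type reading of $d_1,d_2,w_i$ from Table~\ref{TabCISWeight} against the Newton-polygon factorization behind Theorem~\ref{thm:ZetaHS} (noting the common factor $(1-t)^{-1}$ in both reduced zeta functions) confirms that all normalizations match.
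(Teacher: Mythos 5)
Your proposal is correct and is essentially identical to the paper's own argument: the corollary is obtained by combining Theorem~\ref{thm:ZetaHS} with the identity of \cite{EG} expressing $P_{\widetilde{X}}(t)\cdot\mathrm{Or}_{\widetilde{X}}(t)$ as the product of Saito duals $\overline{\zeta}^\ast_{\widetilde{X},1}(t)\cdot\overline{\zeta}^\ast_{\widetilde{X},2}(t)$. Your additional attention to the hypotheses of \cite{EG} (the non-isolated intermediate quadric) is sensible diligence, but it does not change the route, which is the same two-step substitution the paper uses.
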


\section{Examples}
Let  $f(x,y,z)$ be a weighted homogeneous polynomial with reduced system of weights $W=(w_1,w_2,w_3;d)$. The {\em Gorenstein parameter} $a_f$ of $f$ is defined to be
\[
a_f:=d-w_1-w_2-w_3.
\]

We now consider the classification of virtual singularities according to the Gorenstein parameter $a_{f}$ of the non-degenerate invertible polynomial $f$.

The classification of the non-degenerate invertible polynomials  $f$ with $[G_{f} : G_0]=2$ and with $a_{f}< 0$ can be extracted from \cite[Table~3]{ET}. From this we derive the classification of virtual singularities given in Table~\ref{TabGor<0}.
\begin{table}[h]
\begin{center}
\begin{tabular}{|c|c|c|c|c|c|}
\hline
Type & $p_1,p_2, p_3$ & $\hh$ & Name  & Dolgachev & Gabrielov \\
\hline
IIA & $2,3,6$ & $-y^2z+z^2+x^3+x^2y$ & $J_{1,-1}$ & $2,2;2,1$ & $2,3,2$ \\
IIB & $2,2,2k$ & $-x^2z+y^2+yz^k+xz^k$ & $A_{2k-1,-1}$ & $2,2k-1;1,1$ & $2,2,2k-2$ \\
${\rm IIB}^\sharp$ & $2,2,2k$ & $-x^2z+y^2+yz^k+xy$ & $A^\sharp_{2k-1,-1}$ & $k,1;k,1$ & $1,2,2k-1$ \\
\hline
\end{tabular}
\end{center}
\caption{Gorenstein parameter $<0$ cases} \label{TabGor<0}
\end{table}

One can also classify the non-degenerate invertible polynomials with $[G_{f} : G_0]=2$ with $a_{f}=0,1$. It turns out that there are no such polynomials with $a_{f}=0$. The virtual singularities corresponding to polynomials with $a_{f}=1$ are listed in 
Table~\ref{TabGor1}. 
\begin{table}[h] 
\begin{center}
\begin{tabular}{|c|c|c|c|c|c|}
\hline
Type & $p_1,p_2(q_2), p_3(q_3)$ & $\hh$ & Name  & Dolgachev & Gabrielov \\
\hline
IIA & 2,3,18 & $-y^4z+z^2+x^3+x^2y^3$ & $J_{3,-1}$ & 2,2;2,3 & 2,3,10 \\
IIB & 4,2,6 & $-x^3z+y^2+yz^3+x^2z^3$ & $Z_{1,-1}$ & 2,4;2,2 & 4,2,8  \\
${\rm IIB}^\sharp$ & 4,2,6 & $-x^3z+y^2+yz^3+x^2y$ & $W^\sharp_{1,-1}$ & 3,2;3,2 & 5,2,7  \\
IIB   & 6,2,4 & $-x^4z+y^2+yz^2+x^3z^2$ & $W_{1,-1}$ & 2,2;3,3 & 6,2,6 \\
${\rm IIB}^\sharp$  & 6,2,4 & $-x^4z+y^2+yz^2+x^3y$ & $W^\sharp_{1,-1}$ & 2,3;2,3 & 7,2,5 \\
III  &  2,2,4 & $-y^3z+z^2+x^3y+x^2y^3$ & $Z_{1,-1}$ & 2,2;2,4 & 2,4,8 \\
${\rm IV}_2$ &  3,6,18 & $-x^2z+xy^2+yz^3+xz^3$ & $S^\sharp_{1,-1}$ & 2,4;3,2 & 3,6,4 \\   
${\rm IV}_2^\sharp$ &  3,6,18 & $-x^2z+xy^2+yz^3+x^2y$ & $U_{1,-1}$ & 3,2;3,3 & 4,4,5 \\   
${\rm IV}_1$ &  3,12,24 & $-y^3z+x^3+yz^2+x^2y^2$ & $Q_{2,-1}$ & 2,2;2,5 & 3,3,7 \\
${\rm IV}_2$  & 5,10,20 & $-x^3z+xy^2+yz^2+x^2z^2$ & $S_{1,-1}$ & 2,2;4,3 & 5,5,3 \\
${\rm IV}_2^\sharp$ & 5,10,20 & $-x^3z+xy^2+yz^2+x^3y$ & $S^\sharp_{1,-1}$ & 2,3;2,4 & 6,3,4 \\
\hline
\end{tabular}
\end{center}
\caption{Gorenstein parameter 1 cases} \label{TabGor1}
\end{table}

I turns out that the virtual singularities with $a_{f}=1$  are exactly the virtual singularities corresponding to the bimodal series. 
According to Arnold's classification \cite{A}, there are 8 series of bimodal hypersurface singularities.
The virtual bimodal singularities are defined by setting $k=-1$ in the equations of these singularities. The names of Arnold are used in Table~\ref{TabGor1} and the equations for $k=-1$ are listed in Table~\ref{TabBi}. We compare them with our polynomials $\hh$. We also indicate the names of the dual isolated complete intersection singularities according to the notation of \cite{Wa}. It turns out that these are exactly the singularities in the extension of Arnold's strange duality of \cite{EW}.
\begin{table}[h]
\begin{center}
\begin{tabular}{|c|c|c|c|c|}
\hline
Series & Arnold's equation & Type  & $\hh(x,y,z)$ & Dual \\
\hline
$J_{3,-1}$ & $x^3+x^2y^3+z^2+y^8$ &  IIA & $x^3+x^2y^3+z^2-y^4z$ & $J'_9$\\
$Z_{1,-1}$ & $x^3y+x^2y^3+z^2+y^6$  & III  & $x^3y+x^2y^3+z^2-y^3z$ & $J'_{10}$\\
$Q_{2,-1}$ & $x^3+x^2y^2+yz^2+y^5$  &  ${\rm IV}_1$   & $x^3+x^2y^2+yz^2-y^3z$ & $J'_{11}$\\
$W_{1,-1}$ & $x^3z^2+y^2+z^4+x^5$ &  IIB  & $x^3z^2+y^2+yz^2-x^4z$ & $K'_{10}$\\
$W^\sharp_{1,-1}$ & $(x^3+z^2)^2+y^2+x^4z$  &  ${\rm IIB}^\sharp$ &  $x^3y+y^2+yz^2-x^4z$ & $L_{10}$\\
$S_{1,-1}$ & $xy^2+x^2z^2+yz^2+x^4$  & ${\rm IV}_2$  & $xy^2+x^2z^2+yz^2-x^3z$ & $K'_{11}$ \\
$S^\sharp_{1,-1}$ & $x^3y+xy^2+yz^2+x^3z$   &  ${\rm IV}_2^\sharp$ & $x^3y+xy^2+yz^2-x^3z$ & $L_{11}$ \\
$U_{1,-1}$ & $x^2y+y^3+yz^3+x^2z$ &  ${\rm IV}_2^\sharp$  & $x^2y+xy^2+yz^3-x^2z$ & $M_{11}$\\
\hline
\end{tabular}
\end{center}
\caption{Bimodal virtual singularities} \label{TabBi}
\end{table}

We indicate the values of the Dolgachev and Gabrielov numbers of the polynomials $\hh$ associated to the virtual bimodal singularites and the Dolgachev and Gabrielov numbers of the corresponding dual pairs of polynomials defining the isolated complete intersection singularities (ICIS)  in Table~\ref{TabBimon}.

\begin{table}
\begin{center}
\begin{tabular}{|c|c|c||c|c|c|}
\hline
Name & ${\rm Dol}(\hh)$ &  ${\rm Gab}(\hh)$  &  ${\rm Dol}(\widetilde{\ff}_1, \widetilde{\ff}_2)$ &  ${\rm Gab}(\widetilde{\ff}_1, \widetilde{\ff}_2)$ & Dual  \\
\hline
$J_{3,-1}$  & $2,2;2,3$ & $2,3,10$  & $2,3,10$ & $2,2;2,3$  & $J_9'$  \\
$Z_{1,-1}$ & $2,2;2,4$  & $2,4,8$ & $2,4,8$ & $2,2;2,4$ & $J_{10}'$  \\
$Q_{2,-1}$ & $2,2;2,5$  & $3,3,7$  & $3,3,7$ & $2,2;2,5$ & $J_{11}'$  \\
$W_{1,-1}$  & $2,2;3,3$ & $2,6,6$  & $2,6,6$ & $2,2;3,3$ & $K_{10}'$ \\
$W^\sharp_{1,-1}$  & $2,3;2,3$  & $2,5,7$ & $2,5,7$ & $2,3;2,3$ & $L_{10}$ \\
$S_{1,-1}$  & $2,2;3,4$  & $3,5,5$  & $3,5,5$ & $2,2;3,4$ & $K_{11}'$ \\
$S^\sharp_{1,-1}$  & $2,3;2,4$  & $3,4,6$  & $3,4,6$ & $2,3;2,4$ & $L_{11}$ \\
$U_{1,-1}$ & $2,3;3,3$ & $4,4,5$  & $4,4,5$ & $2,3;3,3$ & $M_{11}$  \\
\hline
\end{tabular}
\end{center}
\caption{Strange duality of virtual bimodal singularities and ICIS} \label{TabBimon}
\end{table}

Let $\hh(x,y,z)=0$ be the equation for one of the virtual bimodal singularities. By inspection, one sees that $\hh$ has besides the origin an additional critical point which is of type $A_1$. Now we want to consider Coxeter-Dynkin diagrams of these singularities. Let $X:=\{ (x,y,z) \in \CC^3 \, | \, \hh(x,y,z)=0\}$.
The function $\hh$ defines a locally trivial fibration $\hh : \CC^3 \setminus X \to \CC^\ast$. Let $V=\hh^{-1}(1) \cap X$ be the Milnor fibre of this fibration. We shall consider a (strongly ) distinguished basis of vanishing cycles of the homology group $H_2(V;\ZZ)$ (see e.g. \cite{E4}). 
The critical points outside the origin give additional vanishing cycles in $H_2(V;\ZZ)$. We define the {\em Milnor number} $\mu$ of $X$ to be the rank of $H_2(V;\ZZ)$. It is equal to the sum of the Milnor numbers of the singular points of $\hh$. It is indicated in Table~\ref{TabBiGab}.

In order to compute a Coxeter-Dynkin diagram for a distinguished basis of vanishing cycles we use the method of Gabrielov \cite{G}. We have to consider the polar curve corresponding to a choice of a linear function $z: \CC^n \to \CC$. The choice of the function is indicated in Table~\ref{TabBiGab}. The additional critical points lie on the polar curve. One can easily generalize the method of Gabrielov to include these additional critical points. By \cite{G}, one obtains an intersection matrix of a distinguished basis of $\hh$ from the one of a distinguished basis for $\hh|_{z=0}$ by the following formulas. Let $(e_j)$ ($j=1,2$ in case a), $j=1,2,3$ in case b) and $j=1,2,3,4$ in case c)) be a distinguished basis of $\hh|_{z=0}$ corresponding to
the Coxeter-Dynkin diagram presented in Fig.~\ref{Figfz=0}. Let $M_j$ be the numbers indicated in Table~\ref{TabBiGab}. Then there is a distinguished basis $(e_j^m, 1 \leq m \leq M_j)$ with the following intersection numbers
\begin{eqnarray*}
\langle e_j^m, e_{j'}^m \rangle & = & \langle e_j,e_{j'} \rangle, \\
\langle e_j^m, e_j^{m'} \rangle & = & 1 \quad \mbox{ for } |m'-m|=1, \\
\langle e_j^m, e_{j'}^{m'} \rangle & = & - \langle e_j,e_{j'} \rangle \quad \mbox{ for } |m'-m|=1 \mbox{ and } (m'-m)(j'-j)<0, \\
\langle e_j^m, e_{j'}^{m'} \rangle & = & 0 \quad \mbox{ for } |m'-m|>1 \mbox{ or } (m'-m)(j'-j)>0.
\end{eqnarray*}

\begin{figure}
$$
\xymatrix{ 
 & & &  & & & &    *{\bullet} \ar@{-}[dr] \ar@{}_{1}[l] & & &  \\
{\mbox{a)}} & *{\bullet} \ar@{-}[r] \ar@{}_{1}[d] & *{\bullet}  \ar@{}^{2}[d] & {\mbox{b)}} & *{\bullet} \ar@{-}[r] \ar@{}_{1}[d] & *{\bullet} \ar@{-}[r] \ar@{}_{3}[d] & *{\bullet} \ar@{}^{2}[d] & {\mbox{c)}} & *{\bullet} \ar@{-}[r] \ar@{}^{4}[d] &  *{\bullet} \ar@{-}[l] \ar@{}_{3}[r]  & \\
& & & & & & &    *{\bullet} \ar@{-}[ur] \ar@{}^{2}[l] & & & 
  } 
$$
\caption{Coxeter-Dynkin diagrams of a distinguished basis for $g|_{z=0}$} \label{Figfz=0}
\end{figure}
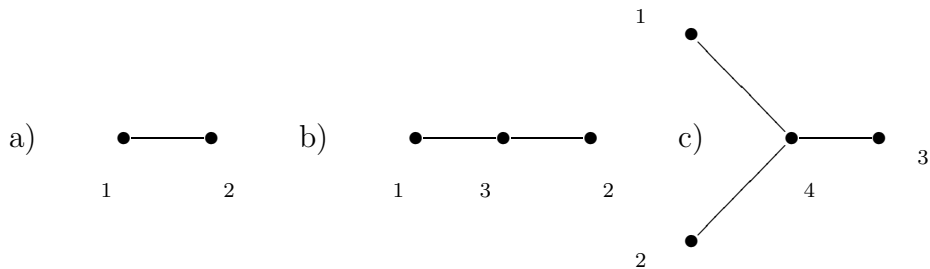

\begin{table}[h]
\begin{center}
\begin{tabular}{|c|c|c|c|c|}
\hline
Virtual  & Equation &  Numbers $M_j$ & $\gamma_1, \gamma_2, \gamma_3$ & $\mu$ \\
\hline
$J_{3,-1}$ & $x^2+y^3+y^2z^3-xz^4$ & $7+1,7$  & $2,3,9+1$ & 15\\
$Z_{1,-1}$ & $x^2+y^3(z-y)+y^2z^3-xz^3$  & $5+1,3,5$ & $2,4,7+1$ & 14\\
$Q_{2,-1}$ & $x^3+(z-x)y^2+x^2z^2-yz^3$ & $2,2,4+1,4$ & $3,3,6+1$ & 13\\
$W_{1,-1}$ & $x^2y+y^2+x^2z^3-xz^4$ & $5,4+1,4$ & $2,5+1,6$ & 14  \\
$W^\sharp_{1,-1}$ & $x^2y+y^2+yz^3-xz^4$  & $5+1,4,4$ & $2,5,6+1$ & 14\\
$S_{1,-1}$ & $x^2y+(z-y)y^2+x^2z^2-xz^3$ & $2,4,3+1,3$ & $3,5,4+1$ & 13\\
$S^\sharp_{1,-1}$ & $x^2y+(z-y)y^2+yz^3-xz^3$ & $2,4+1,3,3$ & $3,4,5+1$ & 13\\
$U_{1,-1}$ & $x^2y+xy^2+yz^3-x^2z$ & $3,3,3+1,3$ & $4,4,4+1$  & 13 \\
\hline
\end{tabular}
\end{center}
\caption{Coxeter-Dynkin diagrams of virtual bimodal singularities} \label{TabBiGab}
\end{table}

In Table~\ref{TabBiGab}, the contribution of the additional critical points to the numbers $M_j$ is indicated. By the sequences of elementary basis transformations indicated in \cite{E2}, the distinguished basis $(e_j^m)$ can be transformed to a distinguished basis 
\[ ( \delta_1; \delta_1^1, \delta_2^1, \ldots, \delta_{\gamma_1-1}^1; \delta_1^2, \delta_2^2, \ldots, \delta_{\gamma_2-1}^2; \delta_1^3, \delta_2^3, \ldots, \delta_{\gamma_3-1}^3;\delta_{\mu-1}, \delta_\mu ) \]
with a Coxeter-Dynkin diagram of the form of Fig.~\ref{FigSpqr} where $\gamma_1, \gamma_2, \gamma_3$ are the Gabrielov numbers of $X$. We call this graph $S_{\gamma_1,\gamma_2,\gamma_3}$.
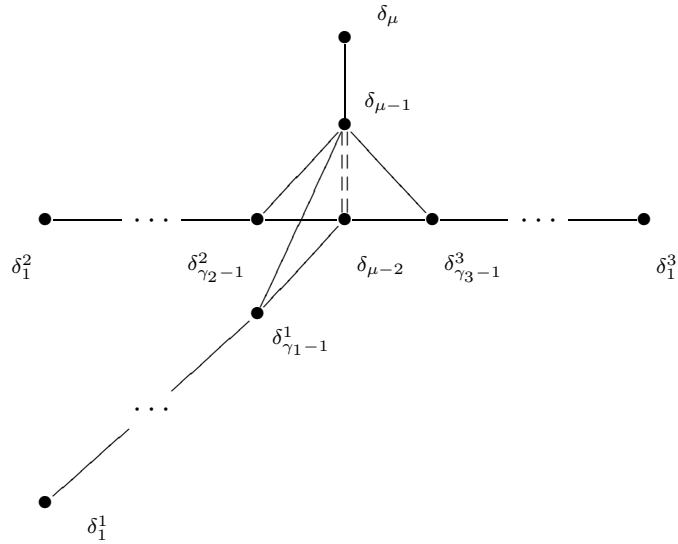
\begin{figure}
$$
\xymatrix{ 
 & & & *{\bullet} \ar@{-}[d] \ar@{}^{\delta_{\mu}}[r] & & & \\
 & & & *{\bullet} \ar@{==}[d] \ar@{-}[dr]  \ar@{-}[ldd] \ar@{}^{\delta_{\mu-1}}[r]
 & & &  \\
 *{\bullet} \ar@{-}[r] \ar@{}_{\delta^2_1}[d]  & {\cdots} \ar@{-}[r]  & *{\bullet} \ar@{-}[r] \ar@{-}[ur]   \ar@{}_{\delta^2_{\gamma_2-1}}[d] & *{\bullet} \ar@{-}[dl] \ar@{-}[r] \ar@{}^{\delta_{\mu-2}}[d] & *{\bullet} \ar@{-}[r]  \ar@{}^{\delta^3_{\gamma_3-1}}[d]  & {\cdots} \ar@{-}[r]  &*{\bullet} \ar@{}^{\delta^3_1}[d]   \\
 & &   *{\bullet} \ar@{-}[dl] \ar@{}_{\delta^1_{\gamma_1-1}}[r]  & & & & \\
 & {\cdots} \ar@{-}[dl] & & & & & \\
*{\bullet}  \ar@{}_{\delta^1_1}[r] & & & & & &
  } 
$$
\caption{The graph $S_{\gamma_1,\gamma_2,\gamma_3}$} \label{FigSpqr}
\end{figure}

Now let us consider the dual pair $(\widetilde{\ff}_1, \widetilde{\ff}_2)$ and the isolated complete intersection singularity defined by it. According to \cite[Proposition~3.6.1]{E1}, one can find a Coxeter-Dynkin diagram with respect to a distinguished basis of thimbles of the form $\Pi_{\gamma_1,\gamma_2,\gamma_3,\gamma_4}$ of Fig.~\ref{FigPipqrs} where $\gamma_1, \gamma_2; \gamma_3, \gamma_4$ are the Gabrielov numbers of the pair $(\widetilde{\ff}_1, \widetilde{\ff}_2)$.
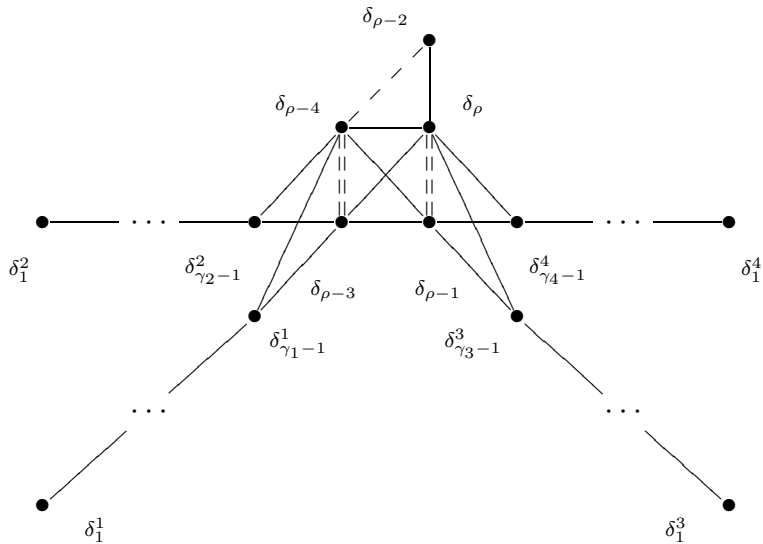
\begin{figure}
$$
\xymatrix{ 
 & & & & *{\bullet} \ar@{-}[d] \ar@{--}[dl] \ar@{}_{\delta_{\rho-2}}[l]  &  & & \\
 & & & *{\bullet} \ar@{-}[r] \ar@{==}[d] \ar@{-}[dr]  \ar@{-}[ldd] \ar@{}_{\delta_{\rho-4}}[l]
 & *{\bullet} \ar@{==}[d] \ar@{-}[dr]  \ar@{-}[rdd] \ar@{}^{\delta_{\rho}}[r]& & &  \\
 *{\bullet} \ar@{-}[r] \ar@{}_{\delta^2_1}[d]  & {\cdots} \ar@{-}[r]  & *{\bullet} \ar@{-}[r] \ar@{-}[ur]   \ar@{}_{\delta^2_{\gamma_2-1}}[d] & *{\bullet} \ar@{-}[dl] \ar@{-}[r] \ar@{-}[ur] \ar@{}^{\delta_{\rho-3}}[ld] & *{\bullet} \ar@{-}[r] \ar@{-}[dr] \ar@{}_{\delta_{\rho-1}}[rd] & *{\bullet} \ar@{-}[r]  \ar@{}^{\delta^4_{\gamma_4-1}}[d]  & {\cdots} \ar@{-}[r]  &*{\bullet} \ar@{}^{\delta^4_1}[d]   \\
& &  *{\bullet} \ar@{-}[dl] \ar@{}_{\delta^1_{\gamma_1-1}}[r]  & & & *{\bullet} \ar@{-}[dr] \ar@{}^{\delta^3_{\gamma_3-1}}[l] & &  \\
 & {\cdots} \ar@{-}[dl] & & & & & {\cdots} \ar@{-}[dr] & \\
*{\bullet}  \ar@{}_{\delta^1_1}[r] & & & & & & & *{\bullet}  
\ar@{}^{\delta^3_1}[l]
  } 
$$
\caption{The graph $\Pi_{\gamma_1,\gamma_2,\gamma_3,\gamma_4}$} \label{FigPipqrs}
\end{figure}


\end{document}